\documentclass[11pt]{article}
\usepackage[T1]{fontenc}
\usepackage{amsmath,amsthm,mathtools}
\usepackage{libertine}
\usepackage[libertine]{newtxmath}
\usepackage{microtype,booktabs}
\usepackage[letterpaper,margin=1in]{geometry}
\usepackage[colorlinks=true,linkcolor=blue,citecolor=blue,urlcolor=blue]{hyperref}
\usepackage[capitalise,nameinlink,noabbrev]{cleveref}
\Crefformat{equation}{#2(#1)#3}
\Crefrangeformat{equation}{#3(#1)#4--#5(#2)#6}
\Crefmultiformat{equation}{#2(#1)#3}{ and #2(#1)#3}{, #2(#1)#3}{, and #2(#1)#3}
\hypersetup{pdftitle={Retained-Set Descent for Diagonal Ramsey Numbers},pdfauthor={Zhipeng Lu and Sichen Wang},pdfsubject={Finite Ramsey proofs, their greatest fixed point, and rigorous upper and lower bounds}}
\newtheorem{theorem}{Theorem}
\newtheorem{lemma}{Lemma}[section]
\newtheorem{proposition}[lemma]{Proposition}
\newtheorem{corollary}[lemma]{Corollary}
\theoremstyle{definition}
\newtheorem{definition}[lemma]{Definition}
\newcommand{\e}{\mathrm e}
\newcommand{\BB}{\mathcal B}
\newcommand{\DD}{\mathsf D}
\newcommand{\pos}[1]{(#1)_+}
\newcommand{\zp}{\frac{1306998938417}{10^{12}}}
\newcommand{\zm}{\frac{1305460275532}{10^{12}}}
\title{Retained-Set Descent for Diagonal Ramsey Numbers}
\author{Zhipeng Lu\\
  \small Shenzhen MSU-BIT University\\
  \small\texttt{zhipeng.lu@smbu.edu.cn}
  \and
  Sichen Wang\\
  \small Shenzhen MSU-BIT University\\
  \small\texttt{wsc@smbu.edu.cn}}
\date{}
\begin{document}
\maketitle
\begin{abstract}
We study how far a fixed Ramsey upper bound can be improved by descending through blue neighborhoods in one vertex set while keeping a second set fixed.
A weighted inequality in the two set sizes determines when the descent can stop.
For the source bound specified here, the infimum diagonal exponent over all finite derivations lies in $[1.305,\,1.307]$.
A finite derivation gives $R(k,k)\le3.69507^k$ for all sufficiently large $k$; a concave polygon proves the lower bound for every finite depth.
We also characterize the infimum as a greatest fixed point and show that every larger exponent has a finite derivation valid uniformly for nearby clique-size ratios.
\end{abstract}

\section{Introduction}
Let $R(k,\ell)$ be the least integer $N$ such that every red--blue coloring of $K_N$ contains a red $K_k$ or a blue $K_\ell$.
Ramsey's theorem~\cite{Ramsey} guarantees that these numbers are finite.
The Erd\H{o}s--Szekeres recurrence~\cite{ES} gives $R(k,\ell)\le\binom{k+\ell-2}{k-1}$, and hence $R(k,k)\le4^k$.
In the other direction, Erd\H{o}s's random-coloring argument~\cite{Erdos} gives a lower bound of order $k2^{k/2}$; Spencer~\cite{Spencer} improved its leading constant using the Lov\'asz local lemma.

Thomason~\cite{Thomason} improved the classical upper bound by a polynomial factor using quasirandomness: a coloring close to the inductive bound must have degrees and small-subgraph counts close to those of a random coloring.
Conlon~\cite{Conlon} developed this approach to obtain $R(k,k)\le4^k\exp(-c(\log k)^2/\log\log k)$.
Sah~\cite{Sah} strengthened the estimate to $R(k,k)\le4^k\exp(-c(\log k)^2)$.
Each bound holds for a suitable absolute constant $c>0$ and all sufficiently large $k$.
These results gave successively larger subexponential improvements while retaining exponential base $4$.

Campos, Griffiths, Morris, and Sahasrabudhe~\cite{CGMS} obtained the first improvement in the exponential base, proving $R(k,k)\le(4-\varepsilon)^k$ for an absolute $\varepsilon>0$ and all sufficiently large $k$.
Their book algorithm works with two vertex sets and controls the red density between them.
Gupta, Ndiaye, Norin, and Wei~\cite{GNNW} replaced this algorithm by a candidate induction, obtained stronger off-diagonal estimates, and optimized the resulting parameters to give $R(k,k)\le3.8^{k+o(k)}$.
Their candidate induction and reuse of improved Ramsey bounds in this optimization are the starting points of our work.

We improve the diagonal bound to $3.69507^k$ for all sufficiently large $k$.
Our descent applies density bounds inside a vertex set $X$ while keeping a second set $Y$ fixed for a final inequality in $|X|^w|Y|$, with $w>0$.
Each resulting bound can be used inside $X$ in a later descent.
We also ask how small an exponent finitely many such repetitions can give from a fixed initial bound.

We call this initial bound the \emph{source}.
It has the form $\log R(a,b)\le a f(b/a)+o(a)$ for integers $1\le b\le a$, where $f$ is a fixed concave function on $[0,1]$ and the error is uniform in $b$.
\Cref{sec:inputs} specifies $f$, and \Cref{sec:source-proof} proves the bound.

A \emph{conditional bound} at density $p\in(0,1)$ and ratio $s\in(0,1]$ applies to colorings with at least a fraction $p$ of their edges red.
An exponent $z$ means that, for some fixed $C\ge1$ and all sufficiently large $k$, order at least $C\e^{zk}$ guarantees a red $K_k$ or a blue $K_{\lfloor sk\rfloor}$.
We use families of such bounds on closed intervals of $s$, with continuous exponents and common constants and thresholds.

Starting from the conditional bounds supplied by the source, we apply the descent rule using bounds proved at earlier steps.
We may also restrict ratio intervals, increase a density threshold or an exponent, and combine bounds at the same density on a finite closed cover of a ratio interval.
A \emph{finite proof} uses only finitely many of these operations; \Cref{def:finite-proof} gives the precise rules.

Let $U_{\rm fin}(p,s)$ be the infimum of the exponents obtained at density $p$ and ratio $s$ by such finite proofs.
In the diagonal case $(p,s)=(1/2,1)$, we construct a finite proof with rational exponent $z_+$ and a rational lower bound $z_-$ for all finite proofs.
Their exact values are given in \Cref{sec:certificate}.

\begin{theorem}\label{thm:main}
For this source and these finite proof rules,
\begin{equation}\label{eq:bracket}
 1.305<z_-\le U_{\rm fin}(1/2,1)\le z_+<1.307.
\end{equation}
The exponent $z_+$ has a finite proof.
For all sufficiently large integers $k$,
\begin{equation}\label{eq:main}
 R(k,k)\le\left\lceil150003\exp(z_+k)\right\rceil,
\end{equation}
and hence $R(k,k)\le3.69507^k$.
\end{theorem}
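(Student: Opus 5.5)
The theorem splits into three claims, and I would prove them separately: the upper bound $U_{\rm fin}(1/2,1)\le z_+$ together with \eqref{eq:main}; the lower bound $z_-\le U_{\rm fin}(1/2,1)$; and the numerical comparisons $1.305<z_-$, $z_+<1.307$, $\e^{z_+}<3.69507$. The last group is pure rational arithmetic on the explicit values from \Cref{sec:certificate}. For instance, $\e^{1.307}\approx 3.6952$, so it suffices to check $z_+<\log 3.69507$ with a rigorous Taylor enclosure. The polynomial constant $150003$ is absorbed by taking $k$ large, since $\lceil150003\,\e^{z_+k}\rceil\le 3.69507^k$ once $k$ exceeds an explicit threshold.

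\textbf{Upper bound.} I would exhibit the finite derivation. Start from the source conditional bounds given by the concave $f$ of \Cref{sec:inputs}, which are proved in \Cref{sec:source-proof}. Then apply a fixed finite sequence of descent steps. Each step works on a grid of closed ratio intervals covering $[s_0,1]$ and a finite list of densities. On each interval it uses the stopping inequality in $|X|^w|Y|$, with a chosen weight $w$, to certify a piecewise-continuous exponent, invoking the bounds from earlier steps inside $X$. Soundness of each rule (restriction, monotonicity in $p$ and $z$, gluing on closed covers, and the descent rule itself) is given by the lemmas behind \Cref{def:finite-proof}. So the finite proof yields, at $(1/2,1)$, an exponent $z_+$ with constant $C$ and a threshold. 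Tracking the constants through the finitely many steps gives $C\le150003$.

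Diagonal colorings are handled by symmetry: by swapping colors, either red or blue has density at least $1/2$. This gives $R(k,k)\le\lceil C\e^{z_+k}\rceil$ for large $k$. The verification at each step reduces to finitely many interval-arithmetic inequalities, using concavity and monotonicity to pass from grid points to whole intervals.

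\textbf{Lower bound.} I would construct a concave polygon $g(p,s)$ and prove by induction on proof depth that every exponent produced by any finite proof satisfies $z\ge g(p,s)$. The base case requires that the source exponents lie above $g$. Restriction, weakening and gluing trivially preserve the inequality. For the descent rule, I need that if all inputs satisfy $z\ge g$, then the optimized stopping inequality cannot produce anything below $g$. In other words, $g$ must be a post-fixed point of the descent operator. This is the greatest-fixed-point characterization seen from below, and it gives $U_{\rm fin}(1/2,1)\ge g(1/2,1)=z_-$.

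\textbf{Main obstacle.} I expect the main obstacle to be verifying that the descent operator maps the region above $g$ into itself. This is an optimization over the descent trajectory, the stopping point, and the weight $w$. My first attempt would exploit concavity of $g$: on each polygon face the condition is linear, which reduces it to finitely many vertex checks in exact rationals.
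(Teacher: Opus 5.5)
\textbf{Upper bound.} Your upper-bound plan matches the paper: a checked finite derivation, then color symmetry. One correction: $150003$ is not obtained by tracking constants through the derivation. In \Cref{thm:route} the call constants $C_j$ are absorbed by the strict margins $g>L_j$ once $k\delta>V+\max_j\log C_j$. So the output constant is always $3(1-\pi)/(p-\pi)$, and the root's row density $\pi=\tfrac12-10^{-5}$ gives exactly $150003$.

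\textbf{Lower bound: the density dependence.} The genuine gap is in the lower bound, at the step you flag. You write $g(p,s)$ but never say how it depends on $p$, and that choice is the key idea. A single profile imposed at every density cannot work. We need $v\le T(v)\le U_0$, and source leaves get much cheaper as $p\to1$ because the constraint $x<(1-\mu)^wp^{1/(1-\mu)}$ relaxes. The paper takes $v_F(p,s)=F(s)$ for $p\le p_F(s)=1-\e^{-d_i}$, where $d_i$ is the slope of the segment $(s_{i-1},s_i]$ containing $s$, and $v_F=0$ otherwise. This threshold is matched to the route constraint $d_j>c(q_j)=-\log(1-q_j)$:
\begin{itemize}
\item If a route has $g<F$ at the right end of a cell, the call there must have $q_j>1-\e^{-d_i}$.
\item Then the route slope exceeds $d_i$, so $g-F$ keeps decreasing as the descent moves to smaller ratios.
\item Hence $g(\lambda)<F(\lambda)$ forces $g(\theta)<F(\theta)$.
\end{itemize}
The optimization over all trajectories (partitions, call densities, slopes, calls) therefore collapses to the terminal inequality \Cref{eq:barrier-terminal}; this is \Cref{prop:barrier}. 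Concavity of $F$ alone gives no such reduction. The same coupling is why $d_N\ge\log2$ is needed to read off $F(1)$ at $p=\tfrac12$.

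\textbf{Lower bound: the terminal controls.} Even after this reduction, what remains is not a finite list of linear vertex conditions checkable in exact rationals. Your instinct that concavity reduces $\theta$ and $\lambda$ to partition points is right; this is the role of $D_i(\beta)$ and of convexity in $\lambda$ in \Cref{thm:polygon}. But \Cref{eq:barrier-terminal} must hold for every strict control: all $w>0$, $\mu\in(0,1)$ and $B>b(A)$ with $A>w\tau+(-\log\pi)/(1-\mu)$. This continuum enters through logarithms and the intercept function $b$ of $\widehat f$. The paper eliminates $w,A,B$ in three moves:
\begin{itemize}
\item multiply the control constraint by $\kappa_i=D_i(\beta)/\tau$;
\item combine it with the failed terminal inequality $wD_i(\beta)>A+s_iB-F_i$;
\item apply $s_i\widehat f((1-\kappa_i)/s_i)\le(1-\kappa_i)A+s_iB$.
\end{itemize}
What is left is the transcendental one-parameter test $\Psi(s_i,F_i,\kappa_i,\mu)\le P_i$ for all $\mu\in(0,1)$. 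It is proved by analytic tail bounds outside $\tau\in[1/10,10]$ and by outward interval arithmetic on a cover of that interval. Without the density--slope coupling and this elimination of the controls, your plan does not yield $z_-\le U_{\rm fin}(1/2,1)$.
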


For the Ramsey bound, choose the denser color as red and apply the conditional bound at $(p,s)=(1/2,1)$.

\subsection{Proof Outline}
We use disjoint sets $X,Y$, keeping $Y$ fixed and requiring each vertex of $X$ to have at least a fixed positive fraction of $Y$ as red neighbors.
The red target $k$ and the blue target $\ell$ in $Y$ stay fixed while the blue target $t$ in $X$ decreases.
When $X$ is large enough for a conditional bound at density $q$, we either finish at red density at least $q$ or descend to a blue neighborhood of size greater than $(1-q)(|X|-1)$.
Restricting $X$ to this neighborhood preserves its red degrees into $Y$ and reduces $t$ by one: a blue $K_{t-1}$ there extends through the chosen vertex.

Write the required size of $X$ at $s=t/k$ as $\exp(kg(s))$, with $g$ piecewise affine.
On a cell of slope $d$, the threshold falls by a factor $\e^{-d}$ per step, so we take $d>-\log(1-q)$ to allow for the neighborhood loss.
The profile $g$ must also exceed the exponents of the bounds used inside $X$.
With slopes and calls fixed, optimizing its vertical shift under these conditions and the initial and terminal inequalities gives the cost in~\Cref{eq:route-optimum}.

Reusing these bounds and iterating from the source gives a decreasing sequence of exponents whose limit is a greatest fixed point (\Cref{thm:closure}).
Strict inequalities persist on neighborhoods of ratios; compactness selects finitely many neighborhoods and hence a common finite depth for a whole descent.
Hence every exponent strictly above the limit has a finite proof, uniformly near its target ratio.

For the lower bound, a concave polygon $F$ assigns exponent $F(s)$ at densities up to $1-\e^{-d}$ on a segment of slope $d$.
Where a descent using bounds that respect this estimate has $g<F$, its call density must exceed $1-\e^{-d}$, forcing the slope of $g$ above $d$.
Thus $g-F$ decreases as the descent moves to smaller ratios.
We choose $F$ so that the terminal weighted inequality cannot hold below it.
The supporting-line formula reduces this requirement to one-variable inequalities at its vertices (\Cref{thm:polygon}); the certificates in \Cref{sec:certificate} complete the proof of \Cref{thm:main}.

The Lean 4 formalization and the Python code for exact certificate checks are available in the accompanying repository.\footnote{\url{https://github.com/sichen-wang/diagonal-ramsey-numbers}}

\section{Source Bounds}\label{sec:inputs}
We now describe the source function and express its bound through pairs $(x,y)$.
These pairs supply the weighted inequality and the initial conditional bounds used in the descent.

All logarithms are natural. Let
\[
 h(s)=(1+s)\log(1+s)-s\log s,\qquad h(0)=0.
\]
The classical Ramsey bound gives $\log R(k,\ell)\le kh(\ell/k)$ for $1\le\ell\le k$~\cite{ES}.

Our fixed source is
\begin{equation}\label{eq:source-f}
 f(s)=h(s)+s\e^{-s}P(s)\qquad(0\le s\le1),
\end{equation}
where the piecewise cubic $P$ is specified by the rational partition points, values, and derivatives in \path{data/source.json}.
This source satisfies
\begin{equation}\label{eq:source-rate}
 \log R(a,b)\le a f(b/a)+o(a)\qquad(1\le b\le a),
\end{equation}
with an error uniform in $b$.
\Cref{sec:source-proof} proves this bound and the properties of $f$ used below.

\subsection{Source Pairs and Supporting Lines}
The weighted inequality uses bounds of the form $R(a,b)\le x^{-a}y^{-b}$.
We say that $(x,y)\in(0,1)^2$ is an \emph{admissible Ramsey pair} if this bound holds for all sufficiently large $a+b$.
Color symmetry in~\Cref{eq:source-rate} leads to the extension
\[
 \widehat f(t)=
 \begin{cases}f(t),&0\le t\le1,\\ t f(1/t),&t\ge1.\end{cases}
\]
The source exponent for arbitrary positive targets $a,b$ is then $b\widehat f(a/b)$.
We call $(x,y)\in(0,1)^2$ a \emph{source pair} if
$b\widehat f(a/b)\le-a\log x-b\log y$ for all $a,b>0$, and write $\BB_f$ for their set.
Every interior point of $\BB_f$ is admissible: choose a slightly larger pair still in $\BB_f$ and apply~\Cref{eq:source-rate} with color symmetry.
The resulting error is $o(a+b)$, which the strict coordinate gaps absorb.

Supporting lines give a convenient description of these pairs.
The source verification shows that $f$ is continuous on $[0,1]$ and is $C^1$ on $(0,1]$, where it is strictly concave and increasing.
It also gives $f(0)=0$ and $2f'(1)>f(1)$.
It follows that $\widehat f$ is increasing and concave.
On each smooth piece with $t>1$, $\widehat f''(t)=t^{-3}f''(1/t)<0$; its first derivative is continuous at the reflected source partition points. At $t=1$, the derivative drops from $f'(1)$ to $f(1)-f'(1)$.
Strict concavity and $f(0)=0$ give $f(s)-sf'(s)>0$, so the reflected branch is increasing.
The expression in~\Cref{eq:source-f} gives $\widehat f'(0+)=\infty$, $\widehat f'(\infty)=0$, and $\widehat f(t)/t\to0$ as $t\to\infty$.

For $A>0$, define the intercept of the supporting line with slope $A$ by
\begin{equation}\label{eq:dual}
 b(A)=\sup_{t\ge0}\{\widehat f(t)-At\},\qquad
 \widehat f(t)=\inf_{A>0}\{At+b(A)\}\quad(t>0).
\end{equation}
For fixed $A>0$, $\widehat f(t)-At\to-\infty$ as $t\to\infty$, while $f(t)/t\to\infty$ as $t\downarrow0$ makes this expression positive for small positive $t$.
By continuity, the supremum is positive and attained at some $t>0$.
At each $t>0$, concavity supplies a supporting slope $A>0$: use the derivative, or any slope between the one-sided derivatives at $t=1$.
For this slope $b(A)=\widehat f(t)-At$, proving the second identity.

Writing $x=\e^{-A}$ and $y=\e^{-B}$, the source-pair condition becomes $B\ge b(A)$, so
\[
 \BB_f=\{(\e^{-A},\e^{-B}): A>0,\ B\ge b(A)\}.
\]
The finite convex function $b$ is continuous on $A>0$, so interior source pairs correspond to $B>b(A)$.
Taking $(a,b)=(1,s)$ in the support inequality gives
\begin{equation}\label{eq:support-lower}
 A+s b(A)\ge f(s)\qquad(0<s\le1).
\end{equation}

\subsection{The Weighted Candidate Bound}\label{sec:candidate}
Write $N_\chi(v)$ for the neighborhood of $v$ in color $\chi\in\{R,B\}$ and $\deg_\chi(v,Z)=|N_\chi(v)\cap Z|$.
For disjoint nonempty vertex sets $X,Y$, let $e_R(X,Y)$ be the number of red edges between them and write $d_R(X,Y)=e_R(X,Y)/(|X||Y|)$.
Following~\cite{GNNW}, a \emph{candidate} is an ordered pair $(X,Y)$ of such sets.
We say that a candidate is \emph{$(k,\ell,t)$-good} if $X\cup Y$ contains a red $K_k$, or $X$ contains a blue $K_t$, or $Y$ contains a blue $K_\ell$.
The target in the retained set $Y$ is $\ell$, while the target $t$ in $X$ may decrease.

\begin{lemma}[Weighted Candidate Bound]\label{prop:candidate}
Fix $w>0$ and $\mu,p,x,y\in(0,1)$ with $(x,y)\in\operatorname{int}\BB_f$ and
\[
 x<(1-\mu)^w p^{1/(1-\mu)}.
\]
For all sufficiently large $\ell$, uniformly in positive integers $k,t$,
\[
 d_R(X,Y)\ge p,\qquad
 |X|^w|Y|\ge x^{-k}y^{-\ell}\mu^{-wt}
\]
imply that $(X,Y)$ is $(k,\ell,t)$-good.
\end{lemma}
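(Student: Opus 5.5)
We prove the lemma by induction on $k+t$, with $\ell$ fixed, as in the candidate induction of~\cite{GNNW}. We may assume throughout that $(X,Y)$ has no red $K_k$ in $X\cup Y$, no blue $K_t$ in $X$, and no blue $K_\ell$ in $Y$, and aim for a contradiction. The base cases are small $k$ or $t$. If $t=1$, then $X\neq\emptyset$ already contains a blue $K_1$. If $k$ is bounded, we use admissibility of a slightly larger pair $(x',y')\in\BB_f$ with $x<x'$, $y<y'$, which gives $R(k,\ell)\le x'^{-k}y'^{-\ell}$ for large $\ell$. Note that $|Y|\ge R(k,\ell)$ would finish the argument immediately.

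The first step is to bound $|Y|$. Since $x^{-k}y^{-\ell}$ is at least $R(k,\ell)\cdot\e^{\Omega(k+\ell)}$ by the strict gaps, we may assume $|Y|<R(k,\ell)\le x'^{-k}y'^{-\ell}$. The size hypothesis then forces $|X|^w$ to be large, namely $|X|^w\ge (x'/x)^{k}(y'/y)^{\ell}\mu^{-wt}$. In particular $|X|\ge\mu^{-t}$ times an exponentially large factor.

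The second step is to pick a good vertex. Averaging $d_R(X,Y)\ge p$ shows that some set of vertices of $X$ have large red degree into $Y$. We use a Kővári--Sós--Turán / dependent-random-choice style choice: consider vertices $v\in X$ and split into two moves.

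\textbf{Red step.} Pick $v$ with $\deg_R(v,Y)\ge p|Y|$ and red-degree into $X$ at least $\mu|X|$. Pass to $(N_R(v)\cap X,\,N_R(v)\cap Y)$ with target $k-1$. The size product changes by a factor $\mu^w p$, while the requirement changes by a factor $x$. This is fine provided the red density is maintained, which needs a density-increment argument.

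\textbf{Blue step.} If every high-$Y$-degree vertex has blue degree at least $(1-\mu)|X|$ inside $X$, pass to the blue neighborhood in $X$ with $t-1$. The product loses $(1-\mu)^w$ against a gain of $\mu^{-w}$. Since $\mu^{-w}(1-\mu)^w$ need not exceed $1$, this must be combined with the density change.

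The main obstacle is controlling the red density $d_R$ across steps. Here the exponent $p^{1/(1-\mu)}$ is the signature of the standard trick: run blue steps, each of which costs at most a density drop that telescopes. The approach is to track a potential $|X|^w|Y|\,x^{k}y^{\ell}\mu^{wt}$ together with the density, and show that over a run of $j$ blue steps followed by a red step the density multiplies in a way whose geometric average is $p^{1/(1-\mu)}$. Concretely, we would weight each vertex of $X$ by $\deg_R(v,Y)/|Y|$ and use convexity (Jensen) to show that, after restricting to $N_R(v)$, the new density is at least $p$ on average.

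The hypothesis $x<(1-\mu)^w p^{1/(1-\mu)}$ is exactly what absorbs the worst case, and the error terms are swallowed by the strict slack and large $\ell$. I expect the real work to be making this density-tracking exact, uniformly in $k$ and $t$.
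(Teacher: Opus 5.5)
\textbf{There is a genuine gap: the density-tracking step, which you yourself call ``the real work'', is missing, and the two moves you do specify are mis-parametrized.} Your outer shell is fine. Inducting on $k+t$ with $\ell$ fixed, and stopping when $|Y|$ exceeds an admissible Ramsey bound so that $|X|$ is exponentially large, is exactly how the paper's finite-host Lemma~\ref{lem:local} begins. The paper proves this statement by applying that lemma with $C=1$.

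\emph{The moves.} In the density-free accounting that the hypothesis is calibrated to, a blue move at blue fraction at least $\mu$ costs nothing against the gain $\mu^{-w}$. Otherwise the red child has fraction about $1-\mu$, costing $(1-\mu)^w$ plus the shrinkage of $Y$. You have $\mu$ and $1-\mu$ interchanged. As written, your red step needs $x\le\mu^wp$ and your blue step needs $\mu\le1/2$, and the hypothesis supports both together only at $\mu=1/2$.

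\emph{The density.} Choosing $v$ well controls only the average $a\,d_R(X_R,Y_v)+q\,d_R(X_B,Y_v)\approx d_R(X,Y_v)$. The child you actually enter can lose almost all of its density. So ``the new density is at least $p$ on average'' does not propagate. Likewise, $|X|^w|Y|x^ky^\ell\mu^{wt}$ ``together with the density'' is not a quantity you can show is preserved.

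\emph{The paper's mechanism.} It uses one potential that couples density and size:
$Q_n=(d_R(X,Y)-p+\delta_n)^r|X|^w|Y|$, with an integer $r>\max\{1,w\}$, $\delta_n=\delta/n$, and $n=k+t$.
\begin{itemize}
\item \emph{Row regularization.} Rows of low red degree are deleted first. This does not decrease $Q_n$, because $r>w$, and afterwards every subset of $X$ inherits density at least $p-\delta_n$.
\item \emph{Children.} For the chosen $v$ the split gives $1\le a\alpha_R/\alpha+q\alpha_B/\alpha+1/(\alpha|X|)$. If both children lost the potential, this would yield $1\le\bar p^{-1/r}\{x^{1/r}a^{1-w/r}+\mu^{w/r}q^{1-w/r}\}+1/(\alpha|X|)$. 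That contradicts a scalar inequality valid for $q\le\beta$, with $\beta$ slightly above $\mu$.
\item \emph{Origin of the exponent.} As $r\to\infty$ that inequality becomes exactly $x<(1-\mu)^wp^{1/(1-\mu)}$. This, not a geometric average over runs of blue steps, is where $p^{1/(1-\mu)}$ comes from.
\end{itemize}

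\emph{High blue degree.} The scalar inequality genuinely fails for large $q$, since its maximum over all $q$ is $(x^{1/w}+\mu)^{w/r}$. So vertices of blue degree at least $\beta|X|$ need a separate treatment, which you do not have. A single blue step there gains size but may reset the density excess to nearly zero. The paper handles this with a dichotomy:
\begin{itemize}
\item If at least $R(k,m)$ vertices have high blue degree, a blue $m$-clique among them gives, by common-neighbourhood averaging, a blue $b$-clique with common blue neighbourhood $T$, $|T|\ge\beta^b|X|/2$, where $b=O(\log n)$. The gain $(\beta/\mu)^{wb}$ pays for restarting with excess $\delta/n^2$.
\item Otherwise, Cauchy--Schwarz finds $v$ outside this set with $d_R(X,Y_v)\ge d-\delta/(2n^2)$.
\end{itemize}
Without the excess-power potential and this big-blue-step dichotomy, your induction does not close.
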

\begin{proof}
Choose a slightly larger pair $(\widehat x,\widehat y)$ still in $\operatorname{int}\BB_f$.
Once $\ell$ is large, admissibility guarantees a red $K_a$ or a blue $K_\ell$ in every vertex set of size at least $\widehat x^{-a}\widehat y^{-\ell}$, uniformly for $a\ge1$.
Applying \Cref{lem:local} with $C=1$ proves the claim.
\end{proof}

\subsection{Conditional Bounds}
To reuse a bound during a descent, we need it for an interval of blue-to-red target ratios, with a single size constant and a single threshold for $k$.
\begin{definition}[Conditional Bound]\label{def:conditional}
For $p\in(0,1)$, a compact interval $I\subset(0,1]$, and a continuous function $L:I\to(0,\infty)$, we say that the \emph{conditional bound} $\DD(p,L,I)$ holds if some $C\ge1,K$ have the following property uniformly for $r\in I$ and integers $k\ge K$:
a coloring of red density at least $p$ and order at least $C\e^{kL(r)}$ contains a red $K_k$ or a blue $K_{\lfloor rk\rfloor}$.
We choose $K$ so that all blue targets are positive.
For a constant exponent $z$, the notation $\DD(p,z,I)$ means $L\equiv z$.
\end{definition}

We call $(w,\mu,x,y)$ a \emph{strict control at density $p$} when it satisfies the parameter conditions of \Cref{prop:candidate}.
Write $A=-\log x$, $B=-\log y$, and $\beta=-\log\mu$.
Apply the weighted candidate bound with $t=\ell$ to a balanced partition whose red cross-density is at least $p$; such a partition exists by averaging.
Both set sizes are a constant fraction of the graph's order, so the corresponding exponent is
\begin{equation}\label{eq:leaf}
 L(r)=\frac{A+rB+wr\beta}{1+w}.
\end{equation}
Choose $y_1>y$ with $(x,y_1)\in\operatorname{int}\BB_f$.
For $\ell=\lfloor rk\rfloor$ and a host of order $N\ge\e^{kL(r)}$, the partition satisfies
\[
 |X|^w|Y|\ge\frac{x^{-k}y^{-\ell}\mu^{-w\ell}}{3^{w+1}}
 \ge x^{-k}y_1^{-\ell}\mu^{-w\ell}.
\]
The first inequality uses $kr\ge\ell$ and $B+w\beta>0$; the second holds once $(y_1/y)^\ell\ge3^{w+1}$.
Applying \Cref{prop:candidate} at $(x,y_1)$ proves $\DD(p,L,I)$ with $C=1$ on every compact $I\subset(0,1]$.
The threshold is uniform in $r\in I$, since $\ell\ge k\min I-1$.
These \emph{weighted source bounds} are the starting points of our finite derivations, and we call them \emph{source leaves}.

\section{Retained-Set Descent and Its Cost}\label{sec:routes}
We now allow the conditional bound used in $X$ to change as its blue target decreases.
Throughout the descent we keep $\deg_R(v,Y)\ge\pi|Y|$ for every $v\in X$, for a fixed $\pi\in(0,1)$.
This gives cross-density at least $\pi$ after every restriction of $X$.
A positive function $g(s)$ specifies the required size $\exp(kg(s))$ of $X$ at target $t=sk$.
To use a bound $\DD(q,L,I)$, we require $g>L$ to absorb its size constant; the slope condition below absorbs the loss in a blue-neighborhood step.
\begin{definition}[Route]\label{def:route}
A \emph{route} consists of a partition
$0<\theta=u_0<\cdots<u_m=\lambda\le1$ and, on each closed cell $J_j=[u_{j-1},u_j]$, a proved call $\DD(q_j,L_j,I_j)$ with $J_j\subseteq I_j$.
It also specifies slopes
\[
 d_j>c(q_j):=-\log(1-q_j)
\]
and a continuous positive function $g$, constant on $[0,\theta]$ and affine with slope $d_j$ on $J_j$, such that $g>L_j$ on every closed cell.
\end{definition}
The descent starts at ratio $\lambda$ and stops at $\theta$.
Write $d(t)$ for the piecewise constant slope on $[\theta,\lambda]$, taking the right-hand slope at $\theta$ and internal partition points and the left-hand slope at $\lambda$.

\begin{theorem}[Retained-Set Rule]\label{thm:route}
Fix a route as above, an outer density $p$, and a row density $0<\pi<p<1$.
Choose a strict control at density $\pi$ as the route's \emph{terminal parameters}.
If
\begin{equation}\label{eq:route-test}
 z>g(\lambda),\qquad
 z+w g(\theta)>A+\lambda B+w\theta\beta,
\end{equation}
then $\DD(p,z,\{\lambda\})$ holds with
\begin{equation}\label{eq:C}
 C=\frac{3(1-\pi)}{p-\pi}.
\end{equation}
The conclusion is uniform over truncations and vertical shifts of one fixed route when the new endpoint, output exponent, and shift depend continuously on a parameter in a compact set; the stopping ratio, slopes, densities, calls, and terminal controls stay fixed.
At endpoint $r$, retain the closed cells $[u_{j-1},\min\{u_j,r\}]$ with $u_{j-1}<r$ and require positive uniform margins in $g>L_j$ and in~\Cref{eq:route-test}.
Such a family may include $r=\theta$, with no internal calls.
\end{theorem}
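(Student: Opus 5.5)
We follow the outline of the introduction: build a candidate from the host coloring, then descend in $X$ while keeping $Y$ fixed.

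\emph{Initial candidate.} Let the host have order $N\ge C\e^{zk}$ and red density at least $p$, and set $\ell=\lfloor\lambda k\rfloor$. Split the vertex set into a balanced pair $X_0,Y$ with $|X_0|,|Y|\ge\lfloor N/2\rfloor$ and red cross-density at least $p$; averaging over partitions supplies such a pair. Let $X\subseteq X_0$ consist of the vertices $v$ with $\deg_R(v,Y)\ge\pi|Y|$. Vertices outside $X$ contribute less than $\pi|Y|$ red edges each and vertices in $X$ at most $|Y|$, so
\[
 p|X_0|\le|X|+\pi(|X_0|-|X|),\qquad\text{hence}\qquad |X|\ge\frac{p-\pi}{1-\pi}\,|X_0|.
\]
With $C=3(1-\pi)/(p-\pi)$ this gives $|X|\ge\e^{zk}$, and likewise $|Y|\ge\e^{zk}$ up to a factor we can spare. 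Because $z>g(\lambda)$ with a positive margin, $|X|\ge\e^{kg(\lambda)}\cdot\e^{\varepsilon k}$ for large $k$. Every later restriction of $X$ keeps each row degree into $Y$ at least $\pi|Y|$, so $d_R(X,Y)\ge\pi$ persists. Set $t=\ell$ to start.

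\emph{Descent invariant.} We maintain $|X|\ge\e^{kg(t/k)+\varepsilon' k}$ for the current blue target $t$ in $X$, with red target $k$ fixed. Let $t/k\in J_j$ with $t/k>\theta$. Since $g>L_j$ with a uniform margin, $|X|\ge C_j\e^{kL_j(t/k)}$ for large $k$. Apply the call $\DD(q_j,L_j,I_j)$ to $X$, viewing $X$ as a graph with blue target $\lfloor(t/k)k\rfloor=t$.
\begin{itemize}
 \item If $X$ has red density at least $q_j$, the call yields a red $K_k$ or a blue $K_t$ in $X$, so the candidate is good.
 \item Otherwise some vertex $v$ has blue degree greater than $(1-q_j)(|X|-1)$ in $X$. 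Replace $X$ by that blue neighborhood and set $t\to t-1$. A blue $K_{t-1}$ there extends through $v$.
\end{itemize}
The size drops by at most a factor $\e^{-c(q_j)}(1+o(1))$, while the threshold $\e^{kg}$ drops by $\e^{-d_j}$ or, across a cell boundary, by the appropriate mixed factor. Since $d_j>c(q_j)$ with margin, the invariant persists. Here the $o(1)$ and the shift by $1/k$ are absorbed by the margin, and the finitely many cells make the constants uniform.

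\emph{Termination and uniformity.} The loop runs until $t/k\le\theta$, i.e.\ about $\theta k$ remaining. Now $|X|\gtrsim\e^{kg(\theta)}$ and $|Y|\gtrsim\e^{zk}$, so
\[
 |X|^w|Y|\ge\e^{k(z+wg(\theta))}\ge x^{-k}y^{-\ell}\mu^{-wt}\,\e^{\varepsilon''k}
\]
by the strict form of \Cref{eq:route-test}, using $\ell\le\lambda k$ and $t\le\theta k+1$. \Cref{prop:candidate} at density $\pi$, with $\ell$ large and uniformity in $k,t$, then gives goodness. In every case the original coloring contains a red $K_k$ or a blue $K_\ell$, since a blue clique in $X$ of the current size combined with the descended vertices gives a blue $K_\ell$.

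For the uniform family, all margins are uniform over the compact parameter set, and the route data (cells, calls, constants $C_j,K_j$) are fixed. The thresholds can therefore be taken as maxima of finitely many quantities. Continuity of the endpoint and shift handles the finite-$k$ rounding.

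\emph{Main obstacle.} The hardest step is bookkeeping the rounding. Three errors must be absorbed by the linear-in-$k$ margins uniformly over the compact family: $\lfloor rk\rfloor$ versus $rk$, the factor $(|X|-1)$, and the fact that a step crossing a cell boundary mixes two slopes. The case $r=\theta$, where no call is made, must also be covered. I would handle these by proving a per-step inequality $\log|X_{\rm new}|-kg((t-1)/k)\ge\log|X|-kg(t/k)-o(1)$ with a deficit of at most a constant per step. This deficit is dominated by the slope margin $(d_j-c(q_j))$ per step, which accumulates positively.
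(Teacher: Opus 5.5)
Your proposal is correct and follows the paper's proof. It uses the same balanced split and pruning of low-degree rows, which gives exactly $C=3(1-\pi)/(p-\pi)$, the same call-or-descend step in $X$ with $Y$ fixed, and the same terminal application of the weighted candidate bound at row density $\pi$.

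The differences are only in bookkeeping. The paper uses a discrete profile $\Gamma_k$ whose per-step drop is exactly $d(t/k)$, with $|\Gamma_k-kg|\le V$, and handles $\lambda=\theta$ separately through a source leaf. You instead absorb the boundedly many cell-crossing deficits into the linear slack $z-g(\lambda)$, which works equally well and also covers $\lambda=\theta$ directly.
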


\begin{proof}
If $\lambda=\theta$, the size tests give
\[
 (1+w)z>z+wg(\theta)>A+\theta B+w\theta\beta.
\]
The source leaf at density $\pi$ proves the conclusion at density $p>\pi$, with constant $1\le C$.

Suppose $\theta<\lambda$, and take a graph of red density at least $p$ and order $N\ge C\e^{kz}$.
Averaging gives a balanced partition $X_0,Y$ with cross-density at least $p$ and both parts of size at least $N/3$.
Keep in $X_0$ only the vertices with at least $\pi|Y|$ red neighbors in $Y$.
The surviving set $X$ satisfies
\[
 (p-\pi)|X_0||Y|
 \le e_R(X,Y)-\pi|X||Y|
 \le(1-\pi)|X||Y|.
\]
The choice of $C$ gives $|X|,|Y|\ge\e^{kz}$.
Every nonempty subset of $X$ has red cross-density at least $\pi$ with this fixed $Y$.

Put $n_0=\lfloor\theta k\rfloor$ and $\ell=\lfloor\lambda k\rfloor$, taking $k$ large enough that $n_0\ge1$.
To track the loss, define for integers $0\le t\le\ell$
\[
 \Gamma_k(t)=kg(\theta)+\sum_{j=n_0+1}^{t}d(j/k)\quad(t>n_0),
 \qquad \Gamma_k(t)=kg(\theta)\quad(0\le t\le n_0).
\]
For $V=d_1+\sum_{j<m}|d_{j+1}-d_j|$,
\begin{equation}\label{eq:discrete}
 \Gamma_k(t)-\Gamma_k(t-1)=d(t/k)\quad(t>n_0),\qquad
 |\Gamma_k(t)-kg(t/k)|\le V.
\end{equation}
For the second bound, express $d$ as its initial step at $\theta$ and its subsequent jumps.
For each unit step, the mesh count differs from $k$ times its integral by at most one, even when a jump lies on the mesh.
Summing the absolute jump sizes gives $V$.

Let $C_j$ be the call constants and put
\[
 \delta=\min_j\min_{J_j}(g-L_j)>0,\qquad
 \eta=\min_j(1-q_j-\e^{-d_j})>0.
\]
Choose $k$ so that $k\delta>V+\max_j\log C_j$ and $\eta\e^{kg(\theta)}>1$, and so that all call and candidate thresholds hold.
We prove by induction on $t=1,\ldots,\ell$ that $(Z,Y)$ is $(k,\ell,t)$-good whenever $Z\subseteq X$ and $|Z|\ge\e^{\Gamma_k(t)}$.

For $t\le n_0$, the terminal test gives
\[
 \log(|Z|^w|Y|)\ge k[wg(\theta)+z]
 >k(A+\lambda B+w\theta\beta)
 \ge kA+\ell B+wt\beta.
\]
Since $d_R(Z,Y)\ge\pi$, \Cref{prop:candidate} applies.

For $t>n_0$, put $s=t/k$ and use the cell with assigned slope $d_j$.
If the red density in $Z$ is at least $q_j$, then
$\log|Z|\ge kg(s)-V>kL_j(s)+\log C_j$.
The call gives a red $K_k$ or a blue $K_t$.
Otherwise some vertex has a blue neighborhood $Z'\subseteq Z$ with
\[
 |Z'|>(\e^{-d_j}+\eta)(|Z|-1)>\e^{-d_j}|Z|
 \ge\e^{\Gamma_k(t-1)}.
\]
The middle inequality follows from $\eta|Z|>1$ and $\e^{-d_j}+\eta<1$.
Induction applies to $(Z',Y)$.
A blue $K_{t-1}$ extends through the chosen vertex; either of the other two outcomes already proves the claim.

Finally, \Cref{eq:discrete} and the endpoint test give
$\Gamma_k(\ell)\le kg(\lambda)+V<kz$ for large $k$.
The claim therefore applies to $Z=X$ at $t=\ell$.

For uniformity, take the minimum stopping height and the minima of the strict gaps over the compact parameter set, and the maxima of the finitely many call thresholds.
The same $V$ bounds every truncated profile, since truncation only removes slope jumps.
These choices give a common threshold for $k$; at $\lambda=\theta$ use the fixed source control as above.
\end{proof}

\subsection{The Cost of a Descent}\label{sec:route-cost}
Fix the partition, calls, slopes, and terminal controls, and vary the vertical position of $g$.
Put
\begin{equation}\label{eq:route-costs}
 S(s)=\int_s^\lambda d(t)\,dt,\qquad
 M=\max_j\max_{s\in J_j}\{L_j(s)+S(s)\}.
\end{equation}
The quantity $S(s)$ is the cumulative logarithmic loss in descending from $\lambda$ to $s$.
Every $g$ with the specified slopes has the form $g(s)=H-S(s)$ on $[\theta,\lambda]$.
The internal conditions are exactly $H>M$; they also make $g$ positive, since $M\ge L_1(\theta)+S(\theta)>S(\theta)$.
The conditions in~\Cref{eq:route-test} now read
\[
 H<z,\qquad wH+z>A+\lambda B+w\theta\beta+wS(\theta).
\]
There is such an $H>M$ exactly when
\begin{equation}\label{eq:route-optimum}
 z>Z:=\max\left\{M,
  \frac{A+\lambda B+w\theta\beta+wS(\theta)}{1+w}\right\}.
\end{equation}
Necessity follows from $z>H>M$ and $(1+w)z>z+wH$.
Conversely, if $z>Z$, choose $H<z$ sufficiently close to $z$ to satisfy both inequalities.
Thus $Z$ is the infimum output exponent over these vertical positions.
The term $M$ accounts for every internal call together with the loss needed to reach it; the other term is the least exponent compatible with the weighted terminal inequality.

For one call $\DD(q,L,[\theta,\lambda])$ with affine $L$, choose a slope $d>c(q)$.
Here $S(s)=d(\lambda-s)$, so $L+S$ is affine and its maximum occurs at an endpoint.
Substitution in~\Cref{eq:route-optimum} gives
\[
 Z=\max\left\{L(\lambda),\ L(\theta)+d(\lambda-\theta),\
 \frac{A+\lambda B+w\theta\beta+wd(\lambda-\theta)}{1+w}\right\}.
\]
The first two terms require enough vertices to use the call at both ends of the interval; the second includes all the loss incurred before reaching $\theta$.
The third requires enough vertices to apply the weighted bound at $\theta$.
Every $z>Z$ therefore gives a new conditional bound at $\lambda$.

\subsection{Reuse on an Interval}
To obtain bounds at smaller target ratios, keep the same calls and terminal controls.
For $\theta\le r\le\lambda$, restrict the profile $g$ to $[0,r]$ and allow a nonnegative vertical shift.
Such a shift preserves the inequalities $g>L_j$.
The resulting infimum is
\begin{equation}\label{eq:reuse}
 E(r)=\max\left\{g(r),
 \frac{A+rB+w\{\theta\beta+g(r)-g(\theta)\}}{1+w}\right\},
 \qquad\theta\le r\le\lambda.
\end{equation}
For every $\varepsilon>0$, the whole family $\DD(p,E+\varepsilon,[\theta,\lambda])$ holds with the constant in~\Cref{eq:C}.
To check strictness uniformly, use the shift
$H_r=E(r)-g(r)+\varepsilon/2$ and output $z_r=E(r)+\varepsilon$.
The endpoint gap is $\varepsilon/2$, and
\[
 z_r+w[g(\theta)+H_r]-A-rB-w\theta\beta
 \ge(1+w/2)\varepsilon.
\]
\Cref{thm:route} applies uniformly.

A source leaf is affine, and $E$ is the maximum of two affine functions on each affine piece of $g$.
When a new route uses one of these bounds as a call, refine its cell at the partition points of the called profile.
On each resulting piece, adding the affine loss $S$ preserves convexity.
The maximum in~\Cref{eq:route-costs} is therefore attained at an endpoint of a call cell or a partition point of the called profile inside that cell.

\section{The Optimum over Finite Proofs}\label{sec:closure}
We keep the source $\BB_f$ fixed and minimize over all finite repetitions of the retained-set rule.
\begin{definition}[Finite Proof]\label{def:finite-proof}
A \emph{finite proof} is a finite derivation of conditional bounds using the following rules:
\begin{enumerate}
\item Start with a weighted source bound from~\Cref{eq:leaf}.
\item Apply \Cref{thm:route} with previously derived conditional bounds as its calls, including its uniform families.
\item Restrict to a closed ratio interval, increase the density, or increase the continuous exponent function pointwise.
\item Combine finitely many bounds: on a finite closed cover of a compact ratio interval, use a continuous output that dominates a proved family at the same density on each cover interval.
\end{enumerate}
\end{definition}
Taking the maximum of the constants and thresholds validates the last rule.
Let $U_{\rm fin}(p,s)$ be the infimum of the exponents obtained at $(p,s)$ by these finite proofs.

To compare these proofs across all finite depths, first let $U_0(p,s)$ be the infimum over source leaves alone.
It is finite: take $w=1$, $\mu=1/2$, $A>\log2-2\log p$, and $B>b(A)$.
These strict controls give $0\le U_0(p,s)<\infty$.

To treat all possible descents at once, let $u(p,s)$ specify the required exponent for a call at $(p,s)$.
Order these functions pointwise and consider the complete lattice
$\mathcal L=\{u:(0,1)\times(0,1]\to\mathbb R:0\le u\le U_0\}$.

Define $T(u)(p,\lambda)$ as the infimum of $U_0(p,\lambda)$ and all $z>0$ admitted by the following data:
a finite partition $0<\theta=u_0<\cdots<u_m=\lambda$, densities $q_j\in(0,1)$, slopes $d_j>c(q_j)$, and a continuous positive function $g$, affine with slope $d_j$ on each $J_j$ and constant on $[0,\theta]$, such that
\begin{equation}\label{eq:oracle}
 g(s)>u(q_j,s)\qquad(s\in J_j).
\end{equation}
The terminal parameters form a strict control at some row density $0<\pi<p$ and satisfy~\Cref{eq:route-test}.
All data are finite and fixed before $k$ is chosen.

The operator is monotone: decreasing the internal requirements can only add feasible routes.
Its output is nonincreasing in the outer density $p$, since any leaf or route valid at $p$ remains valid at a larger density.
Set
\[
 U_{n+1}=T(U_n),\qquad U_\infty=\inf_{n\ge0}U_n.
\]

We say that $v\in\mathcal L$ is a \emph{lower bound preserved by the rules} if $v\le T(v)$.
In standard order terminology, such a function is a \emph{post-fixed point} of $T$.
Equivalently, every leaf and every route using internal requirements $v$ has output at least $v$.
Here a proof on a relative neighborhood of $s$ means a conditional bound on a compact ratio interval containing that neighborhood.

\begin{theorem}[Finite Proofs and Preserved Lower Bounds]\label{thm:closure}
The sequence $U_n$ decreases to the greatest fixed point $U_\infty$ of $T$ in $\mathcal L$, and
\begin{equation}\label{eq:characterization}
 U_{\rm fin}(p,s)=U_\infty(p,s)
 =\sup_{\substack{v\in\mathcal L\\v\le T(v)}}v(p,s).
\end{equation}
Every $z>U_\infty(p,s)$ has a finite proof with constant output $z$ on a relative neighborhood of $s$, with uniform constants.
\end{theorem}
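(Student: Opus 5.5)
The proof has four parts: the descent of $U_n$, a uniform finite-proof statement for each level $U_n$, the fixed-point and greatest-post-fixed-point properties, and the identification $U_{\rm fin}=U_\infty$.

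\textbf{Step 1: the sequence decreases.} By definition $T(u)\le U_0$ for every $u\in\mathcal L$, so $U_1\le U_0$. Monotonicity of $T$ and induction then give $U_{n+1}=T(U_n)\le T(U_{n-1})=U_n$. All $U_n$ are nonnegative, so $U_\infty=\inf_n U_n$ exists and lies in $\mathcal L$. Each $U_n$ is nonincreasing in $p$, and hence so is $U_\infty$.

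\textbf{Step 2: every $z>U_n(p,s)$ has a finite proof near $s$.} Before the fixed-point argument, I would prove by induction on $n$ the following claim. For every $z>U_n(p,s)$ there is a finite proof of $\DD(p,z,I)$ on a compact $I$ that is a relative neighborhood of $s$.

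For $n=0$, pick a leaf with $L(s)<z$. Since $L$ is continuous, $L<z$ on a neighborhood of $s$, and rule 3 raises the output to $z$ there.

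For the inductive step, $z>U_{n+1}(p,s)$ is either handled as a leaf, or it is witnessed by a route with $g>U_n(q_j,\cdot)$ on each $J_j$ and strict tests \Cref{eq:route-test}. Fix $j$ and $s'\in J_j$. Choose $z'$ with $g(s')>z'>U_n(q_j,s')$. The induction hypothesis gives a proved $\DD(q_j,z',I_{s'})$ on a neighborhood of $s'$, and continuity of $g$ gives $g>z'$ near $s'$. Compactness of $J_j$ yields finitely many such neighborhoods, and rule 4 combines them into a single call on a compact interval containing $J_j$ in its interior, with $g$ strictly above the call. Doing this for every cell gives an admissible route in the sense of \Cref{thm:route}.

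To obtain a relative neighborhood of the endpoint $\lambda$, I would use the uniform truncation/shift family of \Cref{thm:route}.
\begin{itemize}
\item For $r<\lambda$, truncate the route at $r$.
\item For $r>\lambda$, extend the last cell affinely to $[u_{m-1},r]$. The combined last call already covers a slightly larger interval, and the margins in $g>L_m$ and \Cref{eq:route-test} are strict, so they persist for $r$ close to $\lambda$.
\end{itemize}
With a continuous choice of shift, all margins stay positive on a compact parameter interval, and the theorem gives $\DD(p,z,[\lambda-\epsilon,\lambda+\epsilon])$ with uniform constants.

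\textbf{Step 3: $U_\infty$ is the greatest fixed point.} The inequality $T(U_\infty)\le T(U_n)=U_{n+1}$ for all $n$ gives $T(U_\infty)\le U_\infty$.

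Conversely, take a route witnessing some $z$ in the definition of $T(U_\infty)(p,\lambda)$, so $g>U_\infty(q_j,\cdot)$ on each cell. Fix $j$ and $s\in J_j$. Choose $n_s$ and $z_s$ with $g(s)>z_s>U_{n_s}(q_j,s)$. By Step 2, $z_s$ is proved on a neighborhood of $s$, and every proved bound dominates $U_{n'}$ for the appropriate depth (see the induction in Step 4). Hence $U_{n_s}(q_j,\cdot)\le z_s<g$ on a neighborhood of $s$.

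A finite subcover of the compact cells gives one $N$ with $g>U_N(q_j,\cdot)$ on every cell. The same route is therefore feasible for $T(U_N)$, so $U_\infty\le U_{N+1}\le z$. This proves $U_\infty\le T(U_\infty)$, and hence $U_\infty$ is a fixed point.

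For maximality, let $v\in\mathcal L$ be post-fixed, so $v\le T(v)$. Then $v\le U_0$ by definition of $\mathcal L$, and inductively $v\le T(v)\le T(U_n)=U_{n+1}$. Hence $v\le U_\infty$. Since $U_\infty$ is itself post-fixed, it attains the supremum in \Cref{eq:characterization}.

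\textbf{Step 4: $U_{\rm fin}=U_\infty$.} Step 2 gives $U_{\rm fin}\le U_n$ for every $n$, hence $U_{\rm fin}\le U_\infty$. It also gives the final uniformity assertion of the theorem.

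For the reverse inequality, I would show by induction on the derivation that every proved $\DD(p,L,I)$ satisfies $L\ge U_\infty(p,\cdot)$ on $I$. The rules are handled as follows.
\begin{itemize}
\item Leaves satisfy $L\ge U_0\ge U_\infty$.
\item A route application, including each member of a uniform family viewed as a truncated route, uses calls with $L_j\ge U_\infty$. Then $g>L_j\ge U_\infty(q_j,\cdot)$, so the route is admissible for $T(U_\infty)$. Its output is therefore at least $T(U_\infty)=U_\infty$.
\item Restriction and raising the exponent are trivial. Raising the density is harmless because $U_\infty$ is nonincreasing in $p$.
\item Combination on a cover preserves pointwise domination.
\end{itemize}

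The main obstacle is the neighborhood/compactness bookkeeping in Steps 2 and 3. Pointwise strict inequalities against the possibly discontinuous $U_n$ must be turned into proved calls on open neighborhoods, and the endpoint must be extended past $\lambda$ using the strict margins. I would settle that first, checking carefully that the uniform-family clause of \Cref{thm:route} permits extending the last cell and not only truncating it.
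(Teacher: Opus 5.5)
Your proposal is correct. Its skeleton matches the paper's: monotone descent, level-by-level finite proofs, compactness giving a common depth $N$, the greatest-post-fixed-point comparison, and soundness by induction on derivations.

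The real difference is how you obtain the openness that the compactness step needs.

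\textbf{The paper's route.} The paper carries upper semicontinuity of each section $U_n(p,\cdot)$ through its induction. It uses this twice:
\begin{itemize}
\item to extend the \emph{formal} route past $\lambda$ before replacing its calls;
\item to make the sets $\{s\in J_j:U_n(q_j,s)<g(s)\}$ relatively open in the fixed-point step.
\end{itemize}

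\textbf{Your route.} You never state semicontinuity. You first replace the formal calls by proved calls on intervals slightly larger than each cell. Extending past $\lambda$ then only needs continuity of a proved call. In Step 3 you get $U_{n_s}(q_j,\cdot)\le z_s<g$ near $s$ from Step 2 plus a depth-indexed soundness bound. In effect this proves semicontinuity. As a by-product it identifies $U_n$ with the infimum over proofs of route depth at most $n$, which the paper does not isolate.

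Two points need to be made explicit.
\begin{itemize}
\item \textbf{The soundness bound must be proved before Step 3.} The claim is that every proof built at level $n$ has output at least $U_n$. It follows from the Step~4 template: calls are bounded below by $U_{n-1}$, and the output is bounded below by $T(U_{n-1})=U_n$, which holds by definition. As written, your pointer to Step~4 cites an argument that uses $T(U_\infty)=U_\infty$, so it reads as circular.
\item \textbf{Rule~4 requires a continuous output.} The pointwise largest of your constants $z'$ is only upper semicontinuous. Instead take $g-\delta$, where $\delta>0$ is the minimum of $g$ minus that function over the combined interval. The paper avoids this issue by subdividing each cell and using the constant calls on the subcells directly.
\end{itemize}
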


The order comparison is the classical greatest-fixed-point principle~\cite[Theorem~1]{Tarski}.
Strict inequalities and compactness also identify the limit with finite conditional proofs, including the uniformity required for their reuse.

\begin{proof}
Since $T(U_0)\le U_0$, monotonicity gives $U_{n+1}\le U_n$.
By induction on $n$, we prove both that each ratio section $U_n(p,\cdot)$ is upper semicontinuous and that every $z>U_n(p,s)$ has a finite proof with constant output $z$ on a relative neighborhood of $s$.
Upper semicontinuity means that a strict inequality $U_n(p,s)<a$ persists near $s$.
At level zero, both conclusions follow by choosing a continuous source leaf strictly below $z$.

Suppose the claim holds at level $n$, and let $z>U_{n+1}(p,\lambda)$.
Choose a leaf or a formal route against $U_n$ with output $z'<z$.
A leaf again gives both conclusions by continuity.
For a route, truncate or extend its last affine cell, keeping the density and slope.
Upper semicontinuity preserves $g>U_n(q_m,\cdot)$ near $\lambda$, and the strict endpoint and terminal tests persist by continuity.
Thus the same output $z'$ is feasible for nearby endpoints, proving $U_{n+1}(p,r)<z$ there.

Choose a compact interval of endpoints within this range that contains a relative neighborhood of $\lambda$.
To replace the formal calls by finite proofs, consider the call cells up to the largest endpoint in this neighborhood.
At each point $s$ of a cell with density $q_j$, choose
\[
 U_n(q_j,s)<z_s<g(s).
\]
The induction hypothesis gives a finite proof with constant output $z_s$ near $s$.
Shrink this neighborhood until $z_s<g$ throughout.
Compactness gives a finite subcover of each cell.
Subdivide into closed subcells contained in these neighborhoods and use the corresponding proofs, with the original density and slope.
The calls and their strict inequalities also hold at every shared endpoint.
There are finitely many subcells, so their positive gaps have a common positive lower bound.
The uniform part of \Cref{thm:route} now gives a finite proof with constant output $z$ on the chosen endpoint neighborhood.
This completes the induction.

We next show that $U_\infty$ is fixed.
Monotonicity gives $T(U_\infty)\le U_{n+1}$ for every $n$, hence $T(U_\infty)\le U_\infty$.
For the reverse inequality, take a route feasible against $U_\infty$.
On each closed cell $J_j$, the sets
\[
 O_n=\{s\in J_j:U_n(q_j,s)<g(s)\}
\]
are relatively open, increase with $n$, and cover $J_j$.
Compactness and nesting give a common $N$ with $O_N=J_j$ on every cell.
The route is therefore feasible against $U_N$, so its output is at least $U_{N+1}\ge U_\infty$.
The leaf alternative is also at least $U_\infty$.
Taking infima gives $T(U_\infty)=U_\infty$.

If $v\in\mathcal L$ and $v\le T(v)$, then $v\le U_0$ and
\[
 v\le U_n\quad\Longrightarrow\quad
 v\le T(v)\le T(U_n)=U_{n+1}.
\]
Hence $v\le U_\infty$.
As $U_\infty$ is itself fixed, it is the greatest post-fixed point.

Every finite proof has output at least $U_\infty$, by induction through its rules.
This holds for leaves since $U_0\ge U_\infty$.
A route whose calls are at least $U_\infty$ is feasible against $U_\infty$, so its output is at least $T(U_\infty)=U_\infty$.
The case $\lambda=\theta$ reduces to a leaf.
Restriction, increasing the exponent, and finite patching preserve the comparison.
Increasing the density does too, since $U_\infty$ is nonincreasing in that density.
Thus $U_{\rm fin}\ge U_\infty$.

Conversely, every $z>U_\infty(p,s)$ exceeds $U_n(p,s)$ for some finite $n$.
The local proof constructed above gives $U_{\rm fin}\le U_\infty$ and the claimed uniformity.
\end{proof}

\section{Concave Lower Bounds}\label{sec:barrier}
We construct a function $v\le T(v)$, giving the lower estimate by \Cref{thm:closure}.
Its exponent will be a \emph{concave polygon}: a continuous piecewise affine concave function $F:[0,1]\to\mathbb R$.
Write its partition points and values as
\[
 0=s_0<s_1<\cdots<s_N=1,\qquad F_i=F(s_i),\qquad F_0=0,
\]
and suppose the slopes $d_i=(F_i-F_{i-1})/(s_i-s_{i-1})$ satisfy
$d_1\ge\cdots\ge d_N>0$.
A call at density $q>1-\e^{-d_i}$ requires a route slope greater than $c(q)>d_i$.
This relation suggests using $1-\e^{-d_i}$ as the density up to which $F$ gives a lower bound.
Define
\[
 p_F(s)=1-\e^{-d_i}\quad(s_{i-1}<s\le s_i),\qquad
 v_F(p,s)=\begin{cases}F(s),&p\le p_F(s),\\0,&p>p_F(s).\end{cases}
\]
At each $s_i>0$, $p_F$ uses the slope of the segment ending there.

\begin{proposition}[A Preserved Lower Bound]\label{prop:barrier}
Suppose every strict terminal control of row density at most $p_F(\lambda)$ satisfies
\begin{equation}\label{eq:barrier-terminal}
 F(\lambda)+wF(\theta)
 \le A+\lambda B+w\theta\beta
 \qquad(0\le\theta\le\lambda\le1,\ \lambda>0).
\end{equation}
Then $v_F\le U_\infty$.
If $d_N\ge\log2$, in particular $F(1)\le U_\infty(1/2,1)$.
\end{proposition}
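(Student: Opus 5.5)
The plan is to verify the two hypotheses of \Cref{thm:closure}: $v_F\in\mathcal L$, and $v_F\le T(v_F)$. The characterization \Cref{eq:characterization} then gives $v_F\le U_\infty$ at once.

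\emph{Membership in $\mathcal L$.} First, $v_F\ge0$, because $F$ is concave with $F_0=0$ and positive slopes, so $F\ge0$ on $[0,1]$. It remains to show $v_F\le U_0$. If $p>p_F(s)$ there is nothing to prove. Otherwise take any source leaf at density $p$, that is, a strict control at density $p\le p_F(s)$. Apply \Cref{eq:barrier-terminal} with $\theta=\lambda=s$; this is allowed since the row density is at most $p_F(\lambda)$. It gives $(1+w)F(s)\le A+sB+ws\beta$, so the leaf exponent \Cref{eq:leaf} is at least $F(s)$. Taking the infimum over leaves gives $U_0(p,s)\ge F(s)=v_F(p,s)$. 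The same computation already disposes of the leaf alternative in the definition of $T(v_F)$.

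\emph{Routes respect $F$.} Fix $(p,\lambda)$ with $p\le p_F(\lambda)$ and a route feasible against $v_F$ with output $z$. Such a route has cells $J_j$, densities $q_j$, slopes $d_j>c(q_j)$, profile $g$, and a terminal control at row density $\pi<p$. We must show $z\ge F(\lambda)$. Put $\varphi=g-F$ on $[\theta,\lambda]$. The key observation concerns any point $s\in J_j$ with $\varphi(s)\le0$. There $g(s)\le F(s)$, while the strict oracle inequality \Cref{eq:oracle} gives $g(s)>v_F(q_j,s)$. Hence $v_F(q_j,s)=0$, that is, $q_j>p_F(s)=1-\e^{-d_i}$, where $d_i$ is the slope of the segment of $F$ ending at or containing $s$ from the left. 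Therefore $d_j>c(q_j)>d_i$. So wherever $\varphi\le0$, the slope of $g$ on the cell to the left strictly exceeds the left slope of $F$. The convention that $p_F(s_i)$ uses the segment ending at $s_i$ is exactly what makes this left-sided comparison work at breakpoints of $F$.

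\emph{Propagating to $\theta$.} This is the step needing care. If $g(\lambda)\ge F(\lambda)$, we are done, since $z>g(\lambda)$. Otherwise $\varphi(\lambda)<0$, and I claim $\varphi(\theta)\le0$. Let
\[
 s^*=\inf\{s\in[\theta,\lambda]:\varphi<0\text{ on }[s,\lambda]\}.
\]
By continuity $\varphi(s^*)\le0$. If $s^*>\theta$, then on a short interval to the left of $s^*$ both $g$ and $F$ are affine with slopes $d_j>d_i$. These are the slopes of the cell and segment containing $[s^*-\epsilon,s^*]$, and the observation above applies at $s^*$ itself. Hence $\varphi(s)<\varphi(s^*)\le0$ there, contradicting the choice of $s^*$. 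So $s^*=\theta$ and $\varphi(\theta)\le0$, i.e.\ $g(\theta)\le F(\theta)$. Now the terminal test in \Cref{eq:route-test} and \Cref{eq:barrier-terminal}, applicable since $\pi<p\le p_F(\lambda)$, give
\[
 z+wg(\theta)>A+\lambda B+w\theta\beta\ge F(\lambda)+wF(\theta).
\]
Thus $z>F(\lambda)+w(F(\theta)-g(\theta))\ge F(\lambda)$. Taking the infimum over routes and leaves gives $T(v_F)(p,\lambda)\ge F(\lambda)=v_F(p,\lambda)$. When $p>p_F(\lambda)$ the inequality is trivial, since $T(v_F)\ge0$. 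Hence $v_F\le T(v_F)$.

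\emph{The diagonal value.} If $d_N\ge\log2$, then $p_F(1)=1-\e^{-d_N}\ge1/2$. Hence $v_F(1/2,1)=F(1)$, and the first part gives $F(1)\le U_\infty(1/2,1)$. The main obstacle is the propagation step: handling the one-sided slopes at breakpoints of $F$ and of $g$, and the boundary case $\varphi=0$. Both are settled by the strictness of \Cref{eq:oracle} together with the left-endpoint convention for $p_F$.
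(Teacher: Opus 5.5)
Your proof is correct and follows the paper's argument: the case $\theta=\lambda$ of the terminal hypothesis gives membership in $\mathcal L$ and handles leaves, the strict call condition forces $q_j>p_F$ and hence a steeper slope for $g$ than for $F$ wherever $g\le F$, and carrying $g\le F$ back to $\theta$ contradicts the terminal test. One small repair is needed in the propagation step. With $s^*$ defined through $\varphi<0$, what you wrote does not exclude $\varphi(s^*)=0$, so no contradiction yet follows. Define $s^*$ through $\varphi\le0$ instead, since you only need $\varphi(\theta)\le0$. Alternatively, use the paper's finite backward induction over the route partition refined at the $s_i$, which passes the strict inequality $g<F$ from each right endpoint to the left one.
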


\begin{proof}
Taking $\theta=\lambda$ in~\Cref{eq:barrier-terminal} shows that every source leaf at density $p\le p_F(\lambda)$ has exponent at least $F(\lambda)$.
Thus $0\le v_F\le U_0$.

Consider a route feasible against $v_F$ at density $p\le p_F(\lambda)$, with output $z<F(\lambda)$.
Its endpoint satisfies $g(\lambda)<F(\lambda)$.
Refine the call partition at the points $s_i$.
On a resulting cell $[a,b]$, let $\ell_j$ and $d_i$ be the slopes of $g$ and $F$.
If $g(b)<F(b)$, the closed call condition forces $q_j>p_F(b)=1-\e^{-d_i}$, since $p_F$ uses the left slope at $b$.
Consequently $\ell_j>c(q_j)>d_i$, and
\[
 g(a)-F(a)=g(b)-F(b)-(\ell_j-d_i)(b-a)<0.
\]
Starting at $\lambda$ and applying this implication backward through the finite partition gives $g(\theta)<F(\theta)$.
The row density satisfies $\pi<p\le p_F(\lambda)$, so~\Cref{eq:barrier-terminal} now contradicts
$z+wg(\theta)>A+\lambda B+w\theta\beta$.
Every route and leaf therefore has output at least $v_F$, giving $v_F\le T(v_F)$.
Apply \Cref{thm:closure}; the final assertion follows from $p_F(1)\ge1/2$.
\end{proof}

\subsection{Elimination of the Terminal Controls}
At each positive partition point, we maximize over the stopping ratio and combine the terminal inequalities with the supporting-line formula.
This gives a test in the single parameter $\mu$.
For $0<\mu<1$, write $\tau=-\log(1-\mu)$ and $\beta=-\log\mu$, and define
\begin{equation}\label{eq:Psi}
 \Psi(\lambda,z,\kappa,\mu)
 =\frac{1-\mu}{\kappa}
 \left[z-\lambda\widehat f\!\left(\frac{1-\kappa}{\lambda}\right)\right],
 \qquad \lambda>0,\quad 0<\kappa<1.
\end{equation}

\begin{theorem}[Finite Polygon Criterion]\label{thm:polygon}
Suppose $F$ is as above and
\begin{equation}\label{eq:node-bounds}
 0<F_i<\min\{f(s_i),h(s_i)\}\qquad(1\le i\le N).
\end{equation}
For each $\mu\in(0,1)$ define
\begin{equation}\label{eq:D}
 D_i(\beta)=\max_{0\le j\le i}(F_j-s_j\beta),\qquad
 \kappa_i=D_i(\beta)/\tau,\qquad
 P_i=-\log(1-\e^{-d_i}).
\end{equation}
If, for every $i$ and every $\mu$ with $D_i(\beta)>0$,
\begin{equation}\label{eq:node-test}
 \Psi(s_i,F_i,\kappa_i,\mu)\le P_i,
\end{equation}
then $v_F\le U_\infty$.
\end{theorem}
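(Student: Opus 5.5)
The plan is to verify the hypothesis \Cref{eq:barrier-terminal} of \Cref{prop:barrier}; that proposition then gives $v_F\le U_\infty$. First I would unpack a strict terminal control at row density $\pi\le p_F(\lambda)$. Suppose $\lambda\in(s_{i-1},s_i]$, so that $-\log\pi\ge-\log p_F(\lambda)=P_i$. The condition $x<(1-\mu)^w\pi^{1/(1-\mu)}$ then reads
\[
 A>w\tau+\frac{-\log\pi}{1-\mu}\ge w\tau+\frac{P_i}{1-\mu},
\]
and $(x,y)\in\operatorname{int}\BB_f$ gives $B>b(A)$. So it suffices to prove $F(\lambda)+wF(\theta)\le A+\lambda B+w\theta\beta$ for every $(w,\mu,A,B)$ satisfying these two inequalities.

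\emph{Reduction to nodes.} Fix $\theta$ and the control. On the segment $[s_{i-1},s_i]$ the polygon $F$ is affine and the admissible controls do not change, so the difference of the two sides is affine in $\lambda$. It is therefore minimized at an endpoint of the allowed $\lambda$-range.

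At $\lambda=s_{i-1}$, the monotonicity $d_i\le d_{i-1}$ gives $p_F(s_i)\le p_F(s_{i-1})$. Hence the controls allowed on segment $i$ are among those allowed at node $s_{i-1}$, which is handled by the test at index $i-1$. The node $s_0=0$ is trivial, since $F_0=0$ and both sides are then nonnegative.

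The remaining case has $\theta>s_{i-1}$, where the range of $\lambda$ is $[\theta,s_i]$. The bound at the endpoint $\lambda=s_i$ comes from the node argument below, because $\theta\le s_i$ is allowed there. For the endpoint $\lambda=\theta$ I would note that the node argument in fact proves the stronger inequality
\[
 F(\lambda)+w\max_{\theta'\le\lambda}\{F(\theta')-\theta'\beta\}\le A+\lambda B ,
\]
whose left side is convex in $\lambda$ on the segment, so this case also reduces to the nodes. I expect this endpoint bookkeeping to be the fiddliest step.

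\emph{Node inequality.} At $\lambda=s_i$, maximize over $\theta\in[0,s_i]$. The function $F(\theta)-\theta\beta$ is concave and piecewise affine, so its maximum is attained at a node $s_j$ with $j\le i$, and equals $D_i(\beta)$. It therefore suffices to show
\[
 F_i+wD_i(\beta)\le A+s_iB .
\]
If $D_i\le0$, this follows from \Cref{eq:support-lower} together with $F_i<f(s_i)$.

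If $D_i>0$, use $B\ge b(A)\ge\widehat f(t)-At$ with $t=(1-\kappa)/s_i$. This gives
\[
 A+s_iB\ge\kappa A+s_i\widehat f\!\left(\frac{1-\kappa}{s_i}\right)
 >\kappa w\tau+\frac{\kappa P_i}{1-\mu}+s_i\widehat f\!\left(\frac{1-\kappa}{s_i}\right),
\]
valid for $0<\kappa<1$. Taking $\kappa=\kappa_i=D_i/\tau$, the term $\kappa w\tau$ equals $wD_i$, so the required inequality reduces exactly to $\Psi(s_i,F_i,\kappa_i,\mu)\le P_i$, which is \Cref{eq:node-test}.

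\emph{The constraint $\kappa_i<1$.} This is the step I expect to be the main obstacle, and it is where the second node bound $F_i<h(s_i)$ enters. I would compute the Legendre-type transform
\[
 \sup_{s>0}\{h(s)-s\beta\}.
\]
Since $h'(s)=\log\bigl((1+s)/s\bigr)$, the supremum is attained at $s=\mu/(1-\mu)$ and equals $\log(1+s)=\tau$. Hence $D_i<\tau$ strictly, so $\kappa_i\in(0,1)$ and the substitution above is legitimate.

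With these steps, every strict terminal control satisfies \Cref{eq:barrier-terminal}, and \Cref{prop:barrier} yields $v_F\le U_\infty$.
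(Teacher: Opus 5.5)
Your proposal is correct and follows the paper's proof. It reduces to the node maximum $D_i(\beta)$, uses $\sup_s\{h(s)-s\beta\}=\tau$ with $F_j<h(s_j)$ to get $0<\kappa_i<1$, and applies the supporting line at $t=(1-\kappa_i)/s_i$; you argue directly where the paper argues by contradiction. It then extends from nodes to segments by convexity of $F(\lambda)-A-\lambda B+w\max\{D_{i-1}(\beta),F(\lambda)-\lambda\beta\}$. The only difference is bookkeeping: the paper applies this convexity argument to all stopping ratios at once, so your preliminary fixed-$\theta$ affine reduction is unnecessary.
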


\begin{proof}
At endpoint $s_i$, the function $F(\theta)-\theta\beta$ is affine between consecutive partition points, so
\[
 \max_{0\le\theta\le s_i}\{F(\theta)-\theta\beta\}=D_i(\beta).
\]
Concavity of $h$ and \Cref{eq:node-bounds} put $F(\theta)$ strictly below $h(\theta)$ for $\theta>0$.
The minimum of $\tau+\theta\beta$ over $0<\mu<1$ occurs at $\mu=\theta/(1+\theta)$ and equals $h(\theta)$.
Thus
\[
 F(\theta)<h(\theta)\le\tau+\theta\beta\qquad(\theta>0).
\]
The term at $\theta=0$ is zero, so $0\le D_i(\beta)<\tau$.

Fix a strict control at row density $\pi\le1-\e^{-d_i}$ and put $P_\pi=-\log\pi\ge P_i$.
If the terminal inequality fails at $\lambda=s_i$ for some stopping ratio, then
\[
 wD_i>A+s_iB-F_i>0,\qquad
 A>w\tau+\frac{P_\pi}{1-\mu}.
\]
Here $A+s_iB-F_i>0$ because $A+s_iB\ge f(s_i)>F_i$.
In particular $0<\kappa_i=D_i/\tau<1$.
Multiplying the second inequality by $\kappa_i$ and using the first gives
\[
 (1-\kappa_i)A+s_iB
 <F_i-\frac{\kappa_i P_\pi}{1-\mu}.
\]
Since $B>b(A)$, the supporting-line formula \Cref{eq:dual} gives
\[
 s_i\widehat f\!\left(\frac{1-\kappa_i}{s_i}\right)
 \le(1-\kappa_i)A+s_iB.
\]
Combining these inequalities yields
$P_\pi<\Psi(s_i,F_i,\kappa_i,\mu)\le P_i$, a contradiction.
Thus \Cref{eq:barrier-terminal} holds for every stopping ratio at each positive partition point.

Now fix controls at row density at most $1-\e^{-d_i}$ and let $\lambda\in[s_{i-1},s_i]$.
Since $F$ is affine on this interval,
\[
 \max_{0\le\theta\le\lambda}\{F(\theta)-\theta\beta\}
 =\max\{D_{i-1}(\beta),\ F(\lambda)-\lambda\beta\},
\]
where $D_0=0$.
The largest terminal difference over all stopping ratios is therefore
\[
 F(\lambda)-A-\lambda B
 +w\max\{D_{i-1}(\beta),\ F(\lambda)-\lambda\beta\}.
\]
This is convex in $\lambda$, being affine plus a positive multiple of a maximum of two affine functions.
It is nonpositive at $s_i$ by the inequality already proved there.
The same holds at $s_{i-1}>0$ because the left slope of $F$ there is at least $d_i$; at zero the value is $-A<0$.
Convexity gives \Cref{eq:barrier-terminal} throughout the interval.
Apply \Cref{prop:barrier}.
\end{proof}

\subsection{The Full Control Interval}
For the supplied polygon, the remaining variable is $\tau\in(0,\infty)$, with
$\mu=1-\e^{-\tau}$ and $\beta(\tau)=-\log(1-\e^{-\tau})$.
We verify the criterion analytically near zero and infinity, leaving a compact interval for the finite computation.
Let $L=1/10$ and $H=10$.
The supplied data satisfy
\begin{equation}\label{eq:tails}
 \beta(L)>d_1,\qquad
 \beta(H)<F_N/2,\qquad F_N<H,\qquad
 2H\e^{-H}<P_1.
\end{equation}
For $\tau\le L$, concavity gives $F_j\le d_1s_j<\beta s_j$ for $j>0$, so $D_i=0$.
For $\tau\ge H$, concavity gives $F_i/s_i\ge F_N$ and hence
$D_i\ge F_i-s_i\beta\ge F_i/2$.
Also $D_i\le F_N<\tau$.
The source term in~\Cref{eq:Psi} is nonnegative, so
\[
 \Psi(s_i,F_i,\kappa_i,\mu)
 \le\e^{-\tau}\frac{F_i\tau}{D_i}
 \le2\tau\e^{-\tau}\le2H\e^{-H}<P_1\le P_i.
\]
Thus only the closed interval $[L,H]$ requires subdivision.

On each control cell, outward interval arithmetic first encloses $\beta$.
The function $D_i$ is nonincreasing in $\beta$, so its two endpoint evaluations enclose the finite maximum in~\Cref{eq:D}.
The source $\widehat f$ increases, so its endpoint values enclose the source term in~\Cref{eq:Psi}.
A cell is accepted if it proves $D_i=0$ throughout, or a nonpositive numerator in~\Cref{eq:Psi} wherever $D_i>0$.
Points with $D_i=0$ already satisfy the terminal condition, since $F_i<A+s_iB$.
Otherwise the checker requires a positive lower bound for $\kappa_i$ before division, and accepts the cell only if it proves $P_i-\Psi>0$.
All other cells are subdivided.
The checker verifies an exact closed cover of $[L,H]$ for every positive partition point.
\Cref{thm:polygon} then covers all weights, source supports, stopping ratios, endpoint ratios, and finite depths.

\section{The Diagonal Bounds}\label{sec:certificate}
We now evaluate the two sides of~\Cref{eq:characterization}.
The repository specifies the source, a finite upper proof, and a rational lower polygon, with the following complete interval covers.
\begin{center}
\begin{tabular}{lrr}
\toprule
Construction & Size & Checked Cells\\
\midrule
Source profile & 2,279 cubic pieces & 12,319\\
Upper proof & 2,932 routes; depth 27 & 1,065,279\\
Lower polygon & 2,048 affine pieces & 853,082\\
\bottomrule
\end{tabular}
\end{center}

The source inequalities are evaluated at 224 fractional binary bits, and the upper and lower inequalities at both 224 and 320 bits.
An independent integer-prefix computation reproduces every route height and loss from the checked primitive enclosures.
The repository contains the four rational inputs in \path{data/}, the Python certificate checker in \path{proof/}, and the Lean formalization in \path{lean/}.
The Python checker uses only the standard library. Running \texttt{python proof/replay.py -{}-workers 16} from the repository root writes the results and input and code hashes to \path{proof/build/replay/}.
The root \path{README.md} gives the build and reproduction instructions for both projects.

\subsection{Exact Interval Arithmetic}\label{sec:arithmetic}
Transcendental quantities are enclosed by intervals whose endpoints are integers divided by a fixed power of two. Rational inputs and arithmetic operations are rounded outward. For logarithms, reduce to \(y\in[1,2]\) and put \(u=(y-1)/(y+1)\in[0,1/3]\). Then
\begin{equation}\label{eq:logseries}
 \log y=2\sum_{j=0}^{m-1}\frac{u^{2j+1}}{2j+1}+E_m,
 \qquad 0\le E_m\le\frac{3u^{2m+1}}{2m+1}.
\end{equation}
The tail is at most \(2u^{2m+1}/((2m+1)(1-u^2))\). The same formula encloses \(\log2\) for reversing the scaling.

For the exponential, put $S_m(z)=\sum_{j=0}^m z^j/j!$ for integers $m\ge0$.
Successive terms after degree $m$ have ratio at most $z/(m+2)$, so a geometric sum gives
\begin{equation}\label{eq:expseries}
 S_m(z)<\e^z\le S_m(z)+
 \frac{z^{m+1}}{(m+1)!}\frac1{1-z/(m+2)}
 \qquad(0<z<m+2).
\end{equation}
At zero the sum is exact.
The interval backend reduces a nonnegative argument to $0\le y\le1/8$, where this tail is at most $2y^{m+1}/(m+1)!$.
Repeated squaring reverses the scaling; negative arguments use outward reciprocals.
A logarithm or division is accepted only after its domain is checked.
For the final exponential bound, a separate rational calculation applies~\Cref{eq:expseries} directly with $m=140$ and compares the upper enclosure with $3.69507$.

\subsection{A Finite Upper Proof}
The file \path{data/upper.json} contains an ordered list of routes.
A call names either a weighted source bound or an earlier route with a terminal control.
All 2,932 routes are reachable from the root.
The density, height, and output margins are all $10^{-12}$; rounding is upward to multiples of $10^{-12}$.
The source pairs use the inward factor $1-10^{-9}$, and we put $\rho=1-2\cdot10^{-5}$.

For each control, let $\pi$ be its leaf density when it defines a source bound, and $\rho p$ when it ends a route at outer density $p$.
The checker computes
\[
 x_{\rm cap}=(1-\mu)^w\pi^{1/(1-\mu)}
\]
and an explicit boundary pair $(\xi,\zeta)$ with $\xi\ge x_{\rm cap}$.
It uses $x=(1-10^{-9})x_{\rm cap}$ and $y=(1-10^{-9})\zeta$.
Both coordinates are strictly below a supporting pair, and $x<x_{\rm cap}$, so the control is strict.
Source supports are given explicitly in the data and checked by interval inequalities.
For a source leaf, the coefficients $A/(1+w)$ and $(B+w\beta)/(1+w)$ are rounded upward, giving a rational affine upper bound for~\Cref{eq:leaf}.

For a reused child with stored profile $g$ and stopping ratio $\theta$, upward enclosures of its terminal parameters give rational coefficients
\[
 a^+\ge\frac{A+w\theta\beta}{1+w},\qquad
 b^+\ge\frac{B}{1+w},\qquad t^+\ge\frac{w}{1+w}.
\]
Its called family is
\[
 \max\{g(s),\ a^++b^+s+t^+(g(s)-g(\theta))\}+10^{-12}.
\]
This dominates~\Cref{eq:reuse} with a strict allowance; rounding $t^+$ upward is valid because $g(s)-g(\theta)\ge0$.

With these calls established, the parent route can be evaluated.
Each slope is an upward-rounded enclosure of $c(q_j)+10^{-12}$, so the sums defining $S$ are rational.
The endpoint rule in~\Cref{eq:route-costs} gives $M$ by checking the parent cell endpoints and the child profiles' partition points inside each cell.
With $Q=10^{12}$, the parent stores
\[
 H=\frac{\lceil Q(M+10^{-12})\rceil}{Q},\qquad g=H-S.
\]
The checker also verifies the full partitions, density implications, child-domain containment, and strict ordering of child indices.
Induction through the ordered list and \Cref{thm:route} prove all its conditional families.

The root has
\[
 p=\tfrac12,\qquad\pi=\tfrac12-10^{-5},\qquad
 \theta=\tfrac14,\qquad\lambda=1,\qquad w=1,\qquad\mu=\tfrac14.
\]
It has 65 cells and uses the left source support in \path{data/upper.json}, at $t\approx0.383$.
Exact reconstruction gives $g(1)>1.3$, while the second term in~\Cref{eq:reuse} is less than $1.2$.
Adding the output margin yields the constant conditional exponent
\[
 z_+=g(1)+10^{-12}=\zp.
\]
Thus $U_\infty(1/2,1)\le z_+$.

\subsection{The Lower Polygon}
The file \path{data/lower.json} specifies the 2,049 rational partition points and values of $F$.
Exact rational comparisons establish positive decreasing slopes and $F(0)=0$.
Interval comparisons establish~\Cref{eq:node-bounds}, the tail bounds in~\Cref{eq:tails}, and $d_N>\log2$.
The 853,082 closed control cells verify~\Cref{eq:node-test}.
\Cref{thm:polygon} and \Cref{prop:barrier} therefore give
\[
 z_-:=F(1)=\zm\le U_\infty(1/2,1).
\]

The resulting interval for the finite-proof optimum has width less than $0.0016$.

\begin{proof}[Proof of \Cref{thm:main}]
The finite upper proof and the preserved polygon give~\Cref{eq:bracket} by \Cref{thm:closure}.
In an arbitrary two-coloring, interchange the colors so that red has density at least $1/2$ and apply the root conditional bound.
Its constant is
\[
 \frac{3(1-\pi)}{1/2-\pi}=150003,
\]
which proves~\Cref{eq:main}.
An independent rational Taylor enclosure gives $\e^{z_+}<3.69507$.
This strict gap absorbs the fixed factor and the ceiling for sufficiently large $k$.
\end{proof}

\clearpage
\appendix

\section{The Finite-Host Weighted Inequality}\label{sec:weighted-proof}
We prove the estimate used in \Cref{prop:candidate}.
Its stopping assumption concerns only proper subsets of the host.
This lets the same estimate prove the source bound by a smallest-counterexample argument in \Cref{sec:source-proof}.

\begingroup
\postdisplaypenalty=10000
\begin{lemma}[Finite-Host Weighted Candidate Bound]\label{lem:local}
Fix \(w>0\) and parameters in \((0,1)\) satisfying
\[
 x_0<\widehat x,\qquad y_0<\widehat y,\qquad
 x_0<(1-\mu_0)^w p^{1/(1-\mu_0)}.
\]
There is \(L_0\), depending only on these parameters, with the following property, uniformly in \(C\ge1\) and the finite host coloring \(H\). Fix \(\ell\ge L_0\), and suppose that for every integer \(a\ge1\), every proper subset \(Z\subsetneq V(H)\) of size
\begin{equation}\label{eq:local-stop}
 |Z|\ge C\widehat x^{-a}\widehat y^{-\ell}
\end{equation}
contains a red \(K_a\) or a blue \(K_\ell\). Then, for all \(k,t\ge1\), every candidate in \(H\) satisfying
\begin{equation}\label{eq:local-size}
 d_R(X,Y)\ge p,\qquad
 |X|^w|Y|\ge C^{w+1}x_0^{-k}y_0^{-\ell}\mu_0^{-wt}
\end{equation}
is \((k,\ell,t)\)-good.
\end{lemma}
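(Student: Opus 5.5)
The proof goes by induction on $k+t$, in the candidate-induction style of~\cite{GNNW}. The hypothesis on proper subsets handles the case where $X$ is too small to help. Throughout, fix the host $H$, the constant $C$ and $\ell\ge L_0$, and call a candidate \emph{large} if it satisfies~\Cref{eq:local-size}. The base cases $k=1$ or $t=1$ are immediate once $X$ is nonempty: a single vertex is a red $K_1$ and a blue $K_1$. For the inductive step, take a large candidate $(X,Y)$ that is not $(k,\ell,t)$-good. I first clean $X$ by the averaging used in the proof of \Cref{thm:route}: discarding vertices of $X$ with fewer than $p|Y|$ red neighbours in $Y$ does not lower $d_R(X,Y)$. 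I expect to need a slightly smaller density $p'<p$ and a slightly larger $x_0'$ with $x_0<x_0'<(1-\mu_0)^wp'^{1/(1-\mu_0)}$ to absorb this. With that done, I may assume every $v\in X$ has $\deg_R(v,Y)\ge p'|Y|$.

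The step splits into three cases. \emph{Blue step:} if some $v\in X$ has $|N_B(v)\cap X|\ge\mu_0|X|$, apply induction to $(N_B(v)\cap X,\,Y)$ with $t-1$. Its density is at least $p'$ by the cleaning, and its size product loses at most $\mu_0^{w}$, which is exactly what the factor $\mu_0^{-wt}$ pays for; a blue $K_{t-1}$ extends through $v$. \emph{Red step:} otherwise every vertex of $X$ has red degree at least $(1-\mu_0)|X|-1$ inside $X$. Pass to $(N_R(v)\cap X,\,N_R(v)\cap Y)$ with target $k-1$. The size product changes by a factor of about $(1-\mu_0)^w\cdot\deg_R(v,Y)/|Y|$, which must be compared with the gain $x_0^{-1}$ from lowering $k$. \emph{Terminal step:} if $|X|$ is so small that no step applies, then $|Y|$ is so large that $|Y|\ge C\widehat x^{-k}\widehat y^{-\ell}$. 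Since $Y\subsetneq V(H)$ (as $X\neq\emptyset$), the stopping hypothesis~\Cref{eq:local-stop} gives a red $K_k$ or a blue $K_\ell$ in $Y$. More generally, whenever a red clique $K_{k-a}$ has been built in $X$ during red steps, the common red neighbourhood in $Y$ is a proper subset, and I will apply~\Cref{eq:local-stop} with this $a$. The strict gaps $x_0<\widehat x$ and $y_0<\widehat y$, together with $\ell\ge L_0$, absorb the constants $C^{w+1}$ and the $-1$ terms.

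The main obstacle is the density in the red step: after passing to $N_R(v)\cap Y$, the cross-density $d_R$ can drop below $p'$, and the clean induction then fails. To get around this I will not induct on single red steps. Instead I will take a maximal run of red steps, choosing the vertex each time by a weighted average; this amounts to following red cliques $v_1,\dots,v_j$ in $X$. I will track the joint quantity $|X_j|^w\,|Y_j|$ and use convexity (Jensen over the choice of $v$, weighted by red degrees into $X$). The aim is to show that the geometric mean of the $Y$-density losses over a run is controlled by $p^{1/(1-\mu_0)}$. This is the source of the exponent $1/(1-\mu_0)$: in a run, a $(1-\mu_0)$-fraction of each step is forced to be red inside $X$. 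Making this averaging precise, with the cleaning and the error terms uniform in $k$ and $t$ and dependent only on the parameters, is where I expect the real work. Everything else is bookkeeping of the factors $(1-\mu_0)^w$, $\mu_0^w$, $p$ against $x_0$, $y_0$, $\mu_0$.
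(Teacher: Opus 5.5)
\textbf{There is a genuine gap: the proposal identifies the main obstacle but does not resolve it, and the scheme you set up cannot resolve it.}

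\emph{Cleaning does not survive repetition.} Cleaning once is harmless. After every red step, however, the rows must be re-cleaned against the new set $Y_v=N_R(v)\cap Y$. Each re-cleaning at a threshold $p'<d$ may keep only a fraction $(d-p')/(1-p')$ of $X$. A constant-factor loss repeated up to $k$ times cannot be absorbed by perturbing $x_0$.

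\emph{The degree-threshold dichotomy forces a failing red step.} When every blue degree in $X$ is below $\mu_0|X|$, your rule sends you to the red child. Even for the best $v$, the red edges into $Y_v$ may sit on $N_B(v)\cap X$. Then $d_R(N_R(v)\cap X,Y_v)$ can be about $(d-\mu_0)/(1-\mu_0)$, so the deficit $1-d$ is multiplied by $1/(1-\mu_0)$ at each step. Your scheme has no way to use the surplus red density that $N_B(v)\cap X$ then carries into $Y_v$. You noticed that density-preserving red steps would give the stronger condition $x_0<(1-\mu_0)^wp$, which shows density must be allowed to drift. The proposed Jensen argument over runs of red cliques is only a hope. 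It says neither what is tracked across interleaved blue steps nor why $p^{1/(1-\mu_0)}$ should emerge.

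\emph{The missing idea is to put a power of the density excess into the induction hypothesis.} Fix an integer $r>\max\{1,w\}$, slacks $\delta_n=\delta/n$ with $n=k+t$, and perturbed parameters $x_0<x$, $y_0<y$, $\mu_0<\mu<\beta$. Then require
\[
Q_n=(d_R(X,Y)-p+\delta_n)^r|X|^w|Y|\ge C^{w+1}x^{-k}y^{-\ell}\mu^{-wt}.
\]
\begin{itemize}
\item \emph{Free regularization.} Writing $Q_n=E^r|X|^{w-r}|Y|^{1-r}$, with $E$ the excess edge count, shows that deleting low-degree rows never decreases $Q_n$. So regularization costs nothing at every step, which removes your cleaning problem.
\item \emph{Many high-blue-degree vertices.} Vertices of blue degree at least $\beta|X|$ are treated separately. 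If there are at least $R(k,m)$ of them, a blue $m$-clique among them yields a blue $b$-clique whose common blue neighbourhood has size at least $\beta^b|X|/2$. Lowering $t$ by $b=O(\log n)$ at once pays, through $(\beta/\mu)^{wb}$, for the slack loss from $\delta_n$ to $\delta_{n-b}$.
\item \emph{Few high-blue-degree vertices.} If there are fewer, Cauchy--Schwarz gives a vertex $v$ outside this set with $d_R(X,Y_v)\ge d-\delta/(2n^2)$.
\item \emph{Choosing a child by the potential.} Both children are then formed with the same $Y'=Y_v$, and the potential, not a degree threshold, decides between them. Split the excess over $X_R$, $X_B$ and $v$, and use
\[
\max_{0\le q\le\beta}\{x^{1/r}(1-q)^{1-w/r}+\mu^{w/r}q^{1-w/r}\}\le(1-\gamma)(p-\delta)^{1/r}.
\]
This gives $Q_R\ge xQ_n$ or $Q_B\ge\mu^wQ_n$. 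In particular, a small blue child with large excess into $Y_v$, which is exactly the configuration above, is accepted.
\end{itemize}
The exponent $p^{1/(1-\mu_0)}$ comes from the limit $(p^{1/r}-\mu_0)^r(1-\mu_0)^{w-r}\to(1-\mu_0)^wp^{1/(1-\mu_0)}$ as $r\to\infty$. This limit is what makes the scalar inequality satisfiable under the hypothesis $x_0<(1-\mu_0)^wp^{1/(1-\mu_0)}$.
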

\endgroup
\begin{proof}
We adapt the candidate induction of~\cite[Lemma~12]{GNNW} to unequal weights. After removing rows with too few red neighbors, we either pass to a common blue neighborhood or choose a vertex whose red or blue child satisfies the induction hypothesis. A power of the excess red density controls the size loss in these steps.

Choose a fixed integer \(r>\max\{1,w\}\) so large that \(p^{1/r}>\mu_0\) and
\[
 x_0<(p^{1/r}-\mu_0)^r(1-\mu_0)^{w-r}.
\]
The logarithm of the right side tends to \(w\log(1-\mu_0)+\log p/(1-\mu_0)\). The function
\[
 x_0^{1/r}(1-q)^{1-w/r}+\mu_0^{w/r}q^{1-w/r}
\]
increases until \(q=\mu_0/(\mu_0+x_0^{1/w})>\mu_0\), and is less than \(p^{1/r}\) at \(q=\mu_0\). By continuity choose
\[
 x_0<x<\widehat x,\quad y_0<y<\widehat y,\quad
 \mu_0<\mu<\beta<1,\quad0<\delta<p/2,
\]
and \(\gamma>0\) so that, with \(\bar p=p-\delta\),
\begin{equation}\label{eq:scalar}
 \max_{0\le q\le\beta}
 \{x^{1/r}(1-q)^{1-w/r}+\mu^{w/r}q^{1-w/r}\}
 \le(1-\gamma)\bar p^{1/r}.
\end{equation}
All these choices precede \(C,H,k,\ell,t\).

Put \(n=k+t\), \(\delta_n=\delta/n\), and induct on \(n\), for fixed sufficiently large \(\ell\), using the stronger hypothesis
\begin{equation}\label{eq:moment}
 d_R(X,Y)\ge p-\delta_n,\qquad
 Q_n=(d_R(X,Y)-p+\delta_n)^r|X|^w|Y|
 \ge C^{w+1}x^{-k}y^{-\ell}\mu^{-wt}.
\end{equation}
After canceling \(C^{w+1}\), the conditions in \Cref{eq:local-size} imply this once \(\ell\) is large, uniformly in \(k,t\), because
\[
 (\delta/n)^r(x/x_0)^k(y/y_0)^\ell(\mu/\mu_0)^{wt}\ge1.
\]
The factors involving \(k,t\) grow exponentially in \(n\), so their ratio to \(n^r\) has a positive infimum; increasing \(\ell\) makes the inequality hold. If \(k=1\) or \(t=1\), any vertex of \(X\) suffices. Henceforth \(k,t\ge2\), and the density excess lies in \((0,1)\).

\emph{Regularize and stop if \(Y\) is large.}
Delete rows of red degree less than \((p-\delta_n)|Y|\). The excess edge count \(E=e_R(X,Y)-(p-\delta_n)|X||Y|\) increases, and
\[
 Q_n=E^r|X|^{w-r}|Y|^{1-r}
\]
does not decrease, since \(r>w\). Positive excess prevents deletion of the last row. Every nonempty \(T\subseteq X\) now has \(d_R(T,Y)\ge p-\delta_n\).
If \(|Y|\ge C\widehat x^{-k}\widehat y^{-\ell}\), apply the stopping assumption with \(a=k\); \(Y\) is proper because \(X\ne\varnothing\). Otherwise \(Q_n\le|X|^w|Y|\) implies
\begin{equation}\label{eq:largeX}
 |X|^w>C^w(\widehat x/x)^k(\widehat y/y)^\ell\mu^{-wt},
 \qquad |X|\ge C\exp(c(k+\ell+t))
\end{equation}
for a fixed \(c>0\).

\emph{Find a common blue neighborhood or a suitable vertex.}
Define
\[
 b=\left\lceil\frac{2r\log n+r\log(1/\delta)+w\log2}
 {w\log(\beta/\mu)}\right\rceil,
 \qquad m=\lceil10\beta^{-1}b^2\rceil,
\]
and \(W=\{v\in X:\deg_B(v,X)\ge\beta|X|\}\). Put \(\xi_n=\delta/(2n^2)\).
We choose \(L_0\) once so that
\[
 |X|\ge5m^2,\qquad
 R(k,m)<\bar p\xi_n|X|,\qquad
 \frac{2n^2}{\delta|X|}<\gamma.
\]
To justify a uniform choice, note that \(b=O(\log n)\), \(m=O((\log n)^2)\), and
\(R(k,m)\le\binom{k+m-2}{m-1}\le k^{m-1}\); the last inequality counts nondecreasing tuples in \(\{1,\ldots,k\}^{m-1}\).
The logarithm of each required lower bound on \(|X|\) is therefore \(O((\log n)^3)\).
Subtracting \(cn\) leaves a function bounded above in \(n\), so \Cref{eq:largeX} supplies all three bounds for one \(L_0\), independently of \(C\ge1,k,t\).
We use the common-neighborhood averaging argument of~\cite[Lemma~9]{GNNW}.
When \(|W|\ge R(k,m)\), either a red \(K_k\) finishes the proof, or \(W\) contains a blue \(m\)-clique \(U\). Put \(N_X=|X|\). The blue density \(\sigma\) between \(U\) and \(X\setminus U\) satisfies
\[
 \sigma\ge\frac{\beta N_X-m}{N_X-m}
 =\beta-\frac{(1-\beta)m}{N_X-m}\ge\beta-\frac1{4m}.
\]
Choose a uniform \(b\)-subset of \(U\), and let $T$ be its common blue neighborhood in $X\setminus U$. Convexity of the piecewise-linear extension of \(j\mapsto\binom jb\) on nonnegative integers gives
\[
 \mathbb E|T|\ge(N_X-m)
 \frac{\binom{\lfloor\sigma m\rfloor}{b}}{\binom mb}
 \ge(N_X-m)(\sigma-b/m)^b.
\]
The second inequality follows factor by factor from the binomial product. Since \(m\ge10\beta^{-1}b^2\),
\[
 \sigma-b/m\ge\beta\left(1-\frac1{8b}\right)>0,
 \qquad N_X-m\ge\frac45N_X.
\]
Bernoulli's inequality gives \(\mathbb E|T|\ge(7/10)\beta^bN_X\). Thus some blue \(b\)-clique \(S\subseteq U\) has a disjoint common blue neighborhood \(T\) with \(|T|\ge\beta^b|X|/2\).

If $b\ge t$, then $S$ already contains the required blue clique. For $b<t$, we have
\[
 \delta_{n-b}-\delta_n\ge\delta/n^2,\qquad
 Q_{n-b}(T,Y)\ge(\delta/n^2)^r2^{-w}\beta^{wb}|X|^w|Y|
 \ge C^{w+1}x^{-k}y^{-\ell}\mu^{-w(t-b)}.
\]
The last inequality uses \(Q_n\le|X|^w|Y|\) and \((\beta/\mu)^{wb}\ge2^w(n^2/\delta)^r\), which follows from the choice of \(b\). Induction applies; a blue \(K_{t-b}\) in \(T\) joins \(S\) to form a blue \(K_t\), and either of the other two outcomes already suffices.

We may now assume \(|W|<R(k,m)<\bar p\xi_n|X|\). Put \(Y_v=N_R(v)\cap Y\), \(d=d_R(X,Y)\), and \(E_0=e_R(X,Y)\). Row regularization gives \(|Y_v|\ge\bar p|Y|>0\) for every \(v\in X\). Counting by vertices in \(Y\) and applying Cauchy--Schwarz gives
\[
 \sum_{v\in X}d_R(X,Y_v)|Y_v|
 =\frac1{|X|}\sum_{y\in Y}\deg_R(y,X)^2\ge dE_0.
\]
Removing the terms indexed by \(W\) loses at most \(|W||Y|\), so
\[
 \sum_{v\notin W}d_R(X,Y_v)|Y_v|
 \ge dE_0-|W||Y|
 >(d-\xi_n)E_0
 \ge(d-\xi_n)\sum_{v\notin W}|Y_v|.
\]
Here \(E_0\ge\bar p|X||Y|\) and \(d-\xi_n>0\). Hence some \(v\notin W\) satisfies \(d_R(X,Y_v)\ge d-\xi_n\).

\emph{Pass to a red or blue child.}
Set \(Y'=Y_v\), \(X_R=N_R(v)\cap X\), \(X_B=N_B(v)\cap X\), and
\[
 a=|X_R|/|X|,\quad q=|X_B|/|X|,\quad
 \alpha=d_R(X,Y')-p+\delta_{n-1}.
\]
Then \(a+q\le1\), \(q\le\beta\), and \(\alpha\ge d-p+\delta_n+\delta/(2n^2)>0\).
For \(\chi\in\{R,B\}\), set \(\alpha_\chi=d_R(X_\chi,Y')-p+\delta_{n-1}\) when \(X_\chi\ne\varnothing\), and set \(\alpha_\chi=0\) otherwise. Write \(u_+=\max\{u,0\}\) and
\[
 Q_\chi=\pos{\alpha_\chi}^{r}|X_\chi|^w|Y'|.
\]
Splitting \(X\) into its two children and the root gives
\begin{equation}\label{eq:split}
 1\le a\frac{\pos{\alpha_R}}\alpha
       +q\frac{\pos{\alpha_B}}\alpha+\frac1{\alpha|X|}.
\end{equation}
The root's contribution is \((1-p+\delta_{n-1})/(\alpha|X|)\le1/(\alpha|X|)\).
We claim that
\[
 Q_R\ge xQ_n\qquad\text{or}\qquad Q_B\ge\mu^wQ_n.
\]
If both inequalities failed, then \(Q_n\le\alpha^r|X|^w|Y|\), \(|Y'|\ge\bar p|Y|\), and \Cref{eq:split} would give
\[
 1\le\bar p^{-1/r}
 \{x^{1/r}a^{1-w/r}+\mu^{w/r}q^{1-w/r}\}
 +\frac1{\alpha|X|}
 \le1-\gamma+\frac{2n^2}{\delta|X|}<1.
\]
The second inequality uses \(a\le1-q\) and \Cref{eq:scalar}; the last is one of our three size bounds.
Thus one child has a positive value of \(Q_{n-1}\) meeting \Cref{eq:moment}, with \(k\) reduced by one in the red child or \(t\) reduced by one in the blue child. Induction applies. The root extends a red \(K_{k-1}\) in \(X_R\cup Y'\), or a blue \(K_{t-1}\) in \(X_B\), respectively. Every other good outcome already gives a required clique.
\end{proof}

\begin{corollary}[Local Density Bound]\label{cor:local-density}
Under the stopping assumption of \Cref{lem:local}, fix $w>0$ and $\mu,p,x,y\in(0,1)$ with
$x<\widehat x$, $y<\widehat y$, and $x<(1-\mu)^wp^{1/(1-\mu)}$.
If a host has red density at least $p$ and order
\begin{equation}\label{eq:local-density}
 N\ge Cx^{-k/(w+1)}(y\mu^w)^{-\ell/(w+1)},
\end{equation}
then it contains a red $K_k$ or a blue $K_\ell$ for all sufficiently large $\ell$, with a threshold independent of $C\ge1$ and $k$.
\end{corollary}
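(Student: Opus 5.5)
This follows the averaging argument that produced the weighted source bounds after \Cref{def:conditional}, with \Cref{lem:local} replacing \Cref{prop:candidate}, so that the stopping assumption and the constant $C$ carry through.

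\emph{Partition.} Given a host of red density at least $p$ and order $N$ satisfying \Cref{eq:local-density}, split $V(H)$ into two parts $X,Y$ of sizes $\lfloor N/2\rfloor$ and $\lceil N/2\rceil$. A uniformly random balanced partition has expected red cross-density equal to the overall red density, since each edge crosses with the same probability. Hence some balanced partition has $d_R(X,Y)\ge p$. Both parts are nonempty once $N\ge2$, which holds for large $\ell$, and both have size at least $N/3$.

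\emph{Slack in the parameters.} \Cref{lem:local} needs strict slack: its size condition is stated with parameters $x_0,y_0,\mu_0$ satisfying $x_0<\widehat x$, $y_0<\widehat y$ and the cap. Apply it with $x_0=x$ and $\mu_0=\mu$, and with some $y_0$ chosen in $(y,\widehat y)$; this is possible because $y<\widehat y$. Set $t=\ell$. We then need
\[
 |X|^w|Y|\ge C^{w+1}x^{-k}y_0^{-\ell}\mu^{-w\ell}.
\]
The left side is at least $3^{-(w+1)}N^{w+1}$. Raising \Cref{eq:local-density} to the power $w+1$ gives
\[
 N^{w+1}\ge C^{w+1}x^{-k}y^{-\ell}\mu^{-w\ell}.
\]
It therefore suffices that $(y_0/y)^{\ell}\ge3^{w+1}$, which holds for all $\ell$ beyond a threshold depending only on $y,y_0,w$. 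Because the powers of $C$ match exactly, this threshold is independent of $C\ge1$ and of $k$.

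\emph{Conclusion.} \Cref{lem:local} now says the candidate is $(k,\ell,\ell)$-good for $\ell\ge L_0$, with $L_0$ depending only on the fixed parameters. This means there is a red $K_k$ in $X\cup Y$, or a blue $K_\ell$ in $X$ or in $Y$, which is the claim. The final threshold is the maximum of $L_0$, the $y_0/y$ threshold, and the threshold ensuring $N\ge2$.

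The main point needing care is that the stopping assumption of \Cref{lem:local} is a hypothesis about the host $H$ itself, so it transfers without change. Proper subsets of $V(H)$ are exactly what the lemma uses, so no re-derivation is needed.
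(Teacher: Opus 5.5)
Your proposal is correct and matches the paper's proof. The paper also picks an intermediate $y_1\in(y,\widehat y)$, takes a balanced partition whose red cross-density is at least $p$ by averaging, uses $(y_1/y)^\ell\ge3^{w+1}$ to absorb the factor $3^{w+1}$, and applies \Cref{lem:local} with $t=\ell$, so the threshold is independent of $C$ and $k$.
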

\begin{proof}
Choose $y<y_1<\widehat y$.
A uniformly chosen balanced partition has expected cross-density equal to the host density, so some disjoint $X,Y$, each of size at least $N/3$, satisfy $d_R(X,Y)\ge p$.
For large $\ell$, $(y_1/y)^\ell\ge3^{w+1}$.
The order bound therefore supplies~\Cref{eq:local-size} at $(x,y_1)$ with $t=\ell$, and \Cref{lem:local} applies.
\end{proof}

\section{Admissibility of the Source}\label{sec:source-proof}
We first give a criterion for a concave Ramsey profile, then verify it for the specified function $f$.
For a continuous $F$ on $[0,1]$ with $F(0)=0$, define
\[
 G_F(a,b)=\max(a,b)F\!\left(\frac{\min(a,b)}{\max(a,b)}\right),
\]
\[
 \BB_F=\{(x,y)\in(0,1)^2:
 G_F(a,b)\le-a\log x-b\log y\text{ for all }a,b>0\}.
\]
Thus $\BB_F$ consists of the exponential bounds that dominate the profile in both coordinates.
For $F=f$, this agrees with the source pairs in \Cref{sec:inputs}, since $G_f(a,b)=b\widehat f(a/b)$.
In the source criterion, $X,Y$ denote scalar support parameters.

\begin{lemma}[Source Criterion]\label{lem:source}
Suppose \(F\) is continuous on \([0,1]\), \(F(0)=0\), is \(C^1\) and concave on \((0,1]\), and has \(F'>0\). If for every \(s\in(0,1]\) there are \(w>0\), \(\mu,Y\in(0,1)\) with
\[
 p_0(s)=1-\e^{-F'(s)},\quad
 X=(1-\mu)^wp_0(s)^{1/(1-\mu)},\quad (X,Y)\in\BB_F,
\]
and
\begin{equation}\label{eq:source-slack}
 (w+1)F(s)+\log X+s\log Y+ws\log\mu>0,
\end{equation}
then, for every \(\varepsilon>0\), some \(C_\varepsilon\ge1\) satisfies
\begin{equation}\label{eq:source-finite}
 R(a,b)\le\left\lceil C_\varepsilon
 \exp\bigl(G_F(a,b)+\varepsilon(a+b)\bigr)\right\rceil
 \qquad(a,b\in\mathbb Z_{\ge1}).
\end{equation}

\end{lemma}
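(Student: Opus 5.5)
We argue by a smallest-counterexample induction on $a+b$ for each fixed $\varepsilon>0$, with \Cref{cor:local-density} as the inductive step. By symmetry of $R$ and of $G_F$ we may assume $b\le a$, so $G_F(a,b)=aF(b/a)$. Suppose~\Cref{eq:source-finite} fails, and take a counterexample with $a+b$ minimal: a coloring $H$ of order $N=\lceil C_\varepsilon\exp(G_F(a,b)+\varepsilon(a+b))\rceil$ with neither a red $K_a$ nor a blue $K_b$. We are free to take $C_\varepsilon$ as large as we like, and we fix it at the end. Making $C_\varepsilon$ large already settles every pair with $b$ below a threshold $b_0(\varepsilon)$: for fixed $b$ we have $R(a,b)\le\binom{a+b-2}{b-1}$, which is polynomial in $a$, while $aF(b/a)\ge0$ and $\varepsilon a$ grows linearly. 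So we may assume $b$ is large.

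\textbf{Local pairs.} First, compactness gives a finite family. For each $s\in(0,1]$, the strict slack~\Cref{eq:source-slack} together with continuity of $F$ and $F'$ lets us fix $(w,\mu,Y)$ and also shrink $X$ slightly to $x<X$ and $Y$ to $y<Y$. After this shrinking the slack is still positive, with margin $\varepsilon/2$ in the exponent, for all ratios $s'$ in a relative neighborhood of $s$ and all densities $p\ge p_0(s)-\eta$.

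Near $s=0$ we need a separate argument. There $p_0(s)\to1-\e^{-F'(0+)}$ might tend to $1$ or stay bounded away, so I would restrict compactness to $[s_*,1]$. Ratios below $s_*$ go into the polynomial regime: for $b\le s_*a$, Erd\H{o}s--Szekeres gives $\log R(a,b)\le ah(b/a)$, and $h(s)\le\varepsilon$ for $s\le s_*$ small. This yields finitely many parameter tuples covering $[s_*,1]$.

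\textbf{Inductive step.} For a counterexample with ratio $s'=b/a$ in the cell of $s$, we would like to apply \Cref{cor:local-density} with $\widehat x=X$, $\widehat y=Y$ and $k=a$, $\ell=b$. Its stopping assumption concerns proper subsets $Z\subsetneq V(H)$ with $|Z|\ge C\widehat x^{-a'}\widehat y^{-b}$. Any such $Z$ either contains a red $K_{a'}$ or a blue $K_b$, or else $|Z|\ge R(a',b)$ fails, and then $(a',b)$ gives a counterexample with $a'+b<a+b$ whenever $a'\le a$. Concretely, $(X,Y)\in\BB_F$ gives $C_\varepsilon\exp(G_F(a',b)+\varepsilon(a'+b))\le C_\varepsilon X^{-a'}Y^{-b}\e^{\varepsilon(a'+b)}$, so the stopping threshold should be taken as $C=C_\varepsilon$ times the subexponential factor.

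Two issues remain. The first is the case $a'>a$: a red $K_{a'}$ contains a red $K_a$, so it cannot occur in $H$, and the inequality must be arranged to fail or be vacuous there. The second is absorbing the $\e^{\varepsilon(\cdot)}$ factor, which I would do by replacing $(X,Y)$ with $(X\e^{-\varepsilon},Y\e^{-\varepsilon})$ as $(\widehat x,\widehat y)$.

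Next comes density. If $H$ has red density at least $p_0(s)-\eta$, the corollary needs $N\ge Cx^{-a/(w+1)}(y\mu^w)^{-b/(w+1)}$. By~\Cref{eq:source-slack} this exponent is below $aF(s')$ by a linear margin, so the corollary applies and yields a contradiction once $b\ge L_0$ (taking the maximum of $L_0$ over the finite family).

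Otherwise the blue density exceeds $\e^{-F'(s)}+\eta$. Some vertex then has blue degree above $(\e^{-F'(s)}+\eta/2)N$, and its blue neighborhood has no blue $K_{b-1}$, so $R(a,b-1)$ exceeds that size. Minimality bounds $R(a,b-1)$ by $C_\varepsilon\exp(aF((b-1)/a)+\varepsilon(a+b-1))$. Concavity gives $aF((b-1)/a)\le aF(b/a)-F'(b/a)$, and since $F'(b/a)$ is close to $F'(s)$ on the cell, the sizes are incompatible. The extra factor $\e^{-\varepsilon}$, together with $\eta$, absorbs the rounding and the ceiling.

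\textbf{Main obstacle.} I expect the hardest point to be the consistent choice of constants and margins. The threshold $C$ in the stopping assumption must be exactly the inductive constant $C_\varepsilon$, which is why \Cref{lem:local} is stated uniformly in $C\ge1$. At the same time, the $\varepsilon$ slack must serve three purposes at once: lowering $(\widehat x,\widehat y)$ to dominate the inductive bound for subsets, providing the margin in~\Cref{eq:source-slack}, and absorbing the step from $F'(s')$ to $F'(s)$ and the $\eta$ loss in the blue-neighborhood case. I would fix, in order, $\varepsilon$, then the finite cover with its $\eta$, then $L_0$, and finally $C_\varepsilon$ large enough to handle all small-$b$ and small-ratio cases.
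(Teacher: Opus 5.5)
Your proposal follows the paper's proof. It uses a smallest counterexample and the classical bound for small targets and small ratios. It takes finitely many local controls on $[s_*,1]$, rescaled by $\e^{-\varepsilon}$ so that $C_\varepsilon\widehat x^{-a'}\widehat y^{-b}$ dominates the inductive bound. It then applies \Cref{cor:local-density} with $C=C_\varepsilon$ when red is dense, and a blue-neighborhood step with concavity otherwise. The only structural difference is that you minimize $a+b$, while the paper minimizes the order $N$ over all target pairs and colorings.

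With your choice, the step you flagged has to be closed, and it is larger than you said. It includes $a'=a$ as well as $a'>a$: the pair $(a,b)$ does not have smaller sum, so it cannot go under minimality. Both cases are vacuous because you took $N=\lceil C_\varepsilon\exp(G_F(a,b)+\varepsilon(a+b))\rceil$ exactly. For $a'\ge a$, since $\widehat x<1$ and $(X,Y)\in\BB_F$,
\[
 C_\varepsilon\widehat x^{-a'}\widehat y^{-b}\ge C_\varepsilon\widehat x^{-a}\widehat y^{-b}\ge C_\varepsilon\exp\bigl(G_F(a,b)+\varepsilon(a+b)\bigr)>N-1,
\]
so no proper subset reaches the stopping threshold. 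The paper avoids this case split by minimizing order. Any proper subset above the $(a',\ell)$ threshold that lacks both cliques is then itself a counterexample of smaller order, whatever $a'$ is.

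One small correction in the red-dense case. After the $\e^{-\varepsilon}$ rescaling, the exponent in~\Cref{eq:local-density} is in general not below $aF(b/a)$. It is, however, below $aF(b/a)+\varepsilon(a+b)$, because the rescaling raises it by only about $\varepsilon(a+b)/(w+1)$. That weaker bound is all the order condition on $N$ requires.
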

\begin{proof}
Fix \(\varepsilon>0\). We first choose finitely many local bounds, then apply one of them to a smallest counterexample.
Since \(F(0)=0\) and \(F'>0\), we have \(F\ge0\).
Choose \(0<\alpha<1\) with \(h(\alpha)<\varepsilon/2\); the classical bound will handle smaller ratios.
At each \(s_0\in[\alpha,1]\), choose \(w,\mu,X,Y\) as in the hypothesis and put \(\bar x=\e^{-\varepsilon}X\), \(\bar y=\e^{-\varepsilon}Y\).
Scaling the controls this way and replacing \(F(s_0)\) by \(F(s_0)+\varepsilon(1+s_0)\) increases the slack by \(w\varepsilon(1+s_0)\).
We may therefore choose
\[
 0<x<\widehat x<\bar x,\qquad
 0<y<\widehat y<\bar y,\qquad 0<p<p_0(s_0),
\]
close enough to \((\bar x,\bar y,p_0(s_0))\) that \(x<(1-\mu)^wp^{1/(1-\mu)}\) and
\[
 H(t):=-\frac{\log x+t\log(y\mu^w)}{w+1},\qquad
 H(s_0)<F(s_0)+\varepsilon(1+s_0).
\]
Continuity gives a neighborhood on which this inequality holds and \(p_0(s)>p+2\eta_{s_0}\) for some \(\eta_{s_0}>0\).
A finite subcover gives \(\eta>0\) and a common threshold \(L\ge2\) for \Cref{cor:local-density}, both independent of \(C\).
At every \(s\in[\alpha,1]\), one selected local bound satisfies \(H(s)<F(s)+\varepsilon(1+s)\) and \(p_0(s)>p+2\eta\).
Every selected pair also satisfies
\begin{equation}\label{eq:majorant}
 -a\log\widehat x-b\log\widehat y
 \ge G_F(a,b)+\varepsilon(a+b)\qquad(a,b>0),
\end{equation}
by the support condition and the scaling by \(\e^{-\varepsilon}\).

Choose an integer \(K\) with \(\alpha K\ge L\), and then \(C\ge\max\{3,2/\eta,4^K\}\). Suppose there is a coloring with no required clique at an integer order
\[
 N\ge C\exp\bigl(G_F(a,b)+\varepsilon(a+b)\bigr).
\]
Choose one of smallest order among all parameter pairs and colorings. Interchange the colors if needed and write its targets as \(k\ge\ell\). The classical bound \cite{ES} excludes \(k<K\). It also excludes \(\ell/k\le\alpha\), since
\[
 \log R(k,\ell)\le kh(\ell/k)\le kh(\alpha)<\varepsilon k/2
 \le kF(\ell/k)+\varepsilon(k+\ell)+\log C.
\]
Thus \(s=\ell/k\in[\alpha,1]\) and \(\ell\ge L\).

For every integer \(a\ge1\), minimality and \Cref{eq:majorant} give a red \(K_a\) or a blue \(K_\ell\) in each proper subset satisfying~\Cref{eq:local-stop}.
Thus the stopping assumption of \Cref{cor:local-density} holds for the local bound selected at \(s\).
If the host's red density is at least \(p_0(s)-\eta>p\), then \(N>C\e^{kH(s)}\), and \Cref{cor:local-density} gives a required clique.

Otherwise its blue density exceeds \(q=\e^{-F'(s)}+\eta<1\). A blue neighborhood \(Z\) has size at least \(q(N-1)>N\e^{-F'(s)}\), since \(N\eta>1\). Concavity gives
\[
 k\bigl(F(s)-F(s-1/k)\bigr)\ge F'(s).
\]
Consequently
\[
 |Z|>N\e^{-F'(s)}
 \ge \e^\varepsilon C\exp\bigl(kF(s-1/k)+\varepsilon(k+\ell-1)\bigr).
\]
The proper subset \(Z\) therefore has order above the proposed \((k,\ell-1)\) threshold. Minimality gives a red \(K_k\) or a blue \(K_{\ell-1}\); in the latter case, add the chosen vertex. Either outcome contradicts the choice of the host.

Hence \Cref{eq:source-finite} holds. For \(1\le b\le a\), its logarithm is at most \(aF(b/a)+2\varepsilon a+\log(2C_\varepsilon)\). Since \(\varepsilon\) is arbitrary, the error is uniform in \(b\).
\end{proof}

\subsection{The Piecewise Cubic Profile}
We now verify the criterion for $f$ in \Cref{eq:source-f}. The function $P$ has 2,279 rational cubic pieces.
The file \path{data/source.json} gives the rational partition points, values, and derivatives of \(P\). On a cell \([a,b]\), set \(u=t-a\), \(\delta=b-a\), and \(\Delta=P(b)-P(a)\). Its exact polynomial is
\[
 P(a+u)=P(a)+P'(a)u
 +\frac{3\Delta/\delta-2P'(a)-P'(b)}\delta u^2
 +\frac{P'(a)+P'(b)-2\Delta/\delta}{\delta^2}u^3.
\]
Exact endpoint identities ensure that \(P\) is globally \(C^1\). For \(t>0\) in the interior of a piece, with \(j(t)=t\e^{-t}P(t)\),
\[
 j''(t)=\e^{-t}\bigl((t-2)P+(2-2t)P'+tP''\bigr),
 \qquad f''(t)=-\frac1{t(1+t)}+j''(t).
\]
The cover in \path{data/source_cells.json} proves \(1-t(1+t)j''(t)>0\) on each closed piece, using that piece's endpoint derivatives, together with \(f'(1)>0\) and \(2f'(1)>f(1)\).
Consequently $f(0)=0$, $f$ is continuous on $[0,1]$, $C^1$ on $(0,1]$, strictly concave and increasing.

To describe the boundary of $\BB_f$, put $x_f(t)=\e^{-f'(t)}$ and $y_f(t)=\e^{tf'(t)-f(t)}$.
Their endpoint limits are
\[
 x_f(t)=\frac{t}{1+t}\e^{-j'(t)}\longrightarrow0,\qquad
 y_f(t)=\frac1{1+t}\e^{tj'(t)-j(t)}\longrightarrow1.
\]

Strict concavity makes $x_f$ increasing and $y_f$ decreasing.
Also $x_f(t)<y_f(t)$: the function $(1+t)f'(t)-f(t)$ decreases to its positive value at $t=1$.
Writing $x_1=x_f(1)$ and $y_1=y_f(1)$, the upper boundary is
\begin{equation}\label{eq:Y}
 Y_f(x)=
 \begin{cases}
 y_f(t),&0<x<x_1,\quad x_f(t)=x,\\
 \e^{-f(1)}/x,&x_1\le x\le y_1,\\
 x_f(t),&y_1<x<1,\quad y_f(t)=x.
 \end{cases}
\end{equation}
These are the supporting lines of $\widehat f$ on its two smooth branches and at its corner.
Thus $b(A)=-\log Y_f(\e^{-A})$.

\subsection{The Source Inequalities}\label{sec:source-verification}
The file \path{data/source_cells.json} supplies a cover of 12,319 closed cells and rational controls \(r,w>0\) on each cell.
Set \(\mu=tr\), take \(X\) as in \Cref{lem:source}, and choose \(Y=Y_f(X)\).
With \(\Delta_f\) denoting the left side of \Cref{eq:source-slack}, the cell inequalities give
\[
 \Delta_f(t)/t>10^{-8}\qquad(0<t\le1).
\]
The cell inequalities also establish \(0<\mu,p_0<1\) for positive \(t\). Since \Cref{lem:source} requires controls at each ratio, they may be chosen separately on each cell.

Near zero, we cancel the logarithmic terms before evaluating the slack. Define
\[
 L_+(u)=\frac{\log(1+u)}u,\qquad L_-(u)=\frac{-\log(1-u)}u,
 \qquad L_+(0)=L_-(0)=1,
\]
\[
 U=(1+t)L_+(t)+\e^{-t}P(t),\quad
 u=\frac{\e^{-j'(t)}}{1+t},\quad
 V=wrL_-(tr)+\frac{uL_-(tu)}{1-tr},
\]
\[
 J(s)=L_+(s)+s\e^{-s}(P(s)-P'(s)).
\]
Then \(f(t)/t=-\log t+U\) and \(\log X=-tV\).
On the right supporting branch, write \(X=y_f(t\sigma)\), \(Y=x_f(t\sigma)\).
Here \(\log Y=\log t+\log\sigma-\log(1+t\sigma)-j'(t\sigma)\) and \(\log\mu=\log t+\log r\).
The terms in \(\log t\) cancel in the normalized slack, giving
\begin{align*}
 \sigma J(t\sigma)&=V,\\
 \Delta_f(t)/t&=(w+1)U-V+\log\sigma-\log(1+t\sigma)
                -j'(t\sigma)+w\log r.
\end{align*}
As \(t\downarrow0\), \(X\to1\) implies \(t\sigma\to0\), since \(y_f\) is strictly decreasing with limit 1 only at zero. Thus \(\sigma J(t\sigma)=V\) and \(J(0)=1\) give the continuous extension \(\sigma(0)=V(0)=wr+\e^{-P(0)}>0\). On each cell near zero, the checker verifies the branch, a positive bracket for \(\sigma\), and \(t\sigma\le1\). The root is unique: \(\sigma\mapsto\sigma J(t\sigma)=-\log y_f(t\sigma)/t\) is strictly increasing for \(t>0\), and is the identity at zero.

The verifier evaluates complete finite polynomial expansions. Source inverse brackets are accepted only after outward-rounded endpoint inequalities prove them. When \(t\) and \(t\sigma(t)\) lie in the interiors of source pieces, possibly different ones,
\[
 \sigma'=\frac{V'-\sigma^2J'(t\sigma)}{J(t\sigma)+t\sigma J'(t\sigma)},
\]
with a checked positive denominator. The integral representations
\[
 L_+(u)=\int_0^1\frac{da}{1+ua},\qquad
 L_-(u)=\int_0^1\frac{da}{1-ua}
\]
give endpoint enclosures for these functions and their derivatives, including at zero. Away from zero, the function $u\mapsto\log Y_f(\e^u)$, with $u<0$, has derivative \(-s\), \(-1\), or \(-1/s\) on the three branches of \Cref{eq:Y}, where $s$ is the supporting parameter. These derivatives agree at the branch junctions.

Together with \Cref{eq:logseries,eq:expseries}, these identities enclose each slack and its derivative. An enclosure at the cell center plus the derivative interval times the displacement encloses the slack throughout the cell. When a cell crosses a source partition point, enclosures from every adjoining piece are combined. The continuous compositions are locally Lipschitz, and the stated derivative bounds hold almost everywhere; integrating those bounds justifies the same enclosure across a junction. The complete cover verifies the hypotheses of \Cref{lem:source}, which gives \Cref{eq:source-rate} with an error uniform in $b$.

\end{document}